\documentclass{amsart}

\usepackage{amsmath,amssymb}
\usepackage{amsthm}
\usepackage{amsrefs}
\usepackage{indentfirst}
\usepackage{mathrsfs}
\usepackage{hyperref}
\usepackage{xcolor}
\usepackage[margin=1.4in]{geometry}
\usepackage{nicefrac}

\newcommand{\bR}{\mathbb{R}}
\newcommand{\bE}{\mathbb{E}}

\newcommand{\bNp}{\mathbb{N}_+}
\newcommand{\calB}{\mathcal{B}}

\newcommand{\calF}{\mathcal{F}}
\newcommand{\calT}{\mathcal{T}}
\newcommand{\norm}[1]{\left\Vert #1\right\Vert}
\newcommand{\gt}{\rightarrow}

\newcommand{\RR}{\mathbb{R}}

\newcommand{\Or}{\mathcal{O}}

\newcommand{\eps}{\epsilon}

\newlength{\leftstackrelawd}
\newlength{\leftstackrelbwd}
\def\leftstackrel#1#2{\settowidth{\leftstackrelawd}%
	{${{}^{#1}}$}\settowidth{\leftstackrelbwd}{$#2$}%
	\addtolength{\leftstackrelawd}{-\leftstackrelbwd}%
	\leavevmode\ifthenelse{\lengthtest{\leftstackrelawd>0pt}}%
	{\kern-.5\leftstackrelawd}{}\mathrel{\mathop{#2}\limits^{#1}}}

\numberwithin{equation}{section}
\newtheorem{theorem}{Theorem}[section]
\newtheorem{lemma}[theorem]{Lemma}
\newtheorem{remark}[theorem]{Remark}
\newtheorem{proposition}[theorem]{Proposition}
\newtheorem{assumption}[theorem]{Assumption}
\newtheorem{definition}[theorem]{Definition}

\allowdisplaybreaks[4]

\title[Spectral Barron Regularity for Static Schr\"odinger Equations on $\mathbb{R}^d$]{A Regularity Theory for Static Schr\"odinger Equations on $\mathbb{R}^d$ in Spectral Barron Spaces}
\author{Ziang Chen}
\address{(ZC) Department of Mathematics, Duke University, Box 90320, Durham, NC 27708.}
\email{ziang@math.duke.edu}
\author{Jianfeng Lu}
\address{(JL) Departments of Mathematics, Physics, and Chemistry, Duke University, Box 90320, Durham, NC 27708.}
\email{jianfeng@math.duke.edu}
\author{Yulong Lu}
\address{(YL)  Department of Mathematics and Statistics, Lederle Graduate Research Tower, University of Massachusetts, 710 N. Pleasant Street, Amherst, MA 01003.}
\email{lu@math.umass.edu}
\author{Shengxuan Zhou}
\address{(SZ)  Beijing International Center for Mathematical Research, Peking University, 5 Yiheyuan Road, Beijing, China, 100871.}
\email{zhoushx19@pku.edu.cn}
\date{\today}

\thanks{The work of ZC and JL is supported in part by the National Science Foundation via award DMS-2012286. YL thanks the support of National Science Foundation via award DMS-2107934. ZC and SZ thank Zexing Li for helpful discussions and communications.}

\begin{document}
	
	\begin{abstract}
		Spectral Barron spaces have received  considerable interest recently as it is the natural function space for approximation theory of two-layer neural networks with a dimension-free convergence rate. 
		In this paper we study the regularity of solutions to the whole-space static Schr\"odinger equation in spectral Barron spaces.  We prove that if the source of the equation lies in the spectral Barron space $\mathcal{B}^s(\RR^d)$ and the potential function admitting a non-negative lower bound decomposes as a positive constant plus a function in $\mathcal{B}^s(\RR^d)$, then the solution lies in the spectral Barron space $\mathcal{B}^{s+2}(\RR^d)$. 
	\end{abstract}

	\maketitle

	\section{Introduction}
	
	Numerical methods using neural networks for solving high-dimensional partial differential equations (PDEs) have achieved great success recently, see e.g., \cites{weinan2017deep, weinan2018deep, han2018solving, weinan2021algorithms}. A key advantage of these neural-network-based algorithms is that neural networks can approximate functions in certain classes efficiently, meaning that the complexity grows at most polynomially in the dimension. By contrast, conventional methods suffers from the curse of dimensionality (CoD). For example, the complexity for approximating a $d$-dimensional function using piecewise constant function with error tolerance $\eps$ is $\Or(\eps^{-d})$ that scales exponentially in $d$.  
	
	The efficiency of neural networks for approximating high-dimensional functions can even been observed for simple network structures, e.g., the two-layer neural networks,
	\begin{equation}\label{2layer-nn}
		g_n(x)=\sum_{j=1}^n a_j\sigma(w_j^\top x + b_j) + c,
	\end{equation}
	where $\sigma$ is the activation function and $n$ is the number of neurons. In the seminal work of Barron \cite{Barron93}, he shows that for a function $g$ satisfying
	\begin{equation}\label{C_g}
		C_g := \int_{\bR^d} |\hat{g}(\xi)|\cdot |\xi| d\xi <\infty,
	\end{equation}
	it can be approximated by a two-layer neural network $g_n$ of the form \eqref{2layer-nn} in $L^2$-norm, where complexity $n$ depends on the dimension $d$ at most polynomially if $C_g$ is viewed as a constant. 
	
	Along the direction of \cite{Barron93}, several specific types of function classes on which the neural network approximation in various norms does not suffer from CoD have been defined and illustrated in the literature, including particularly the spectral Barron space \cites{lu2021priori, siegel2020high, siegel2021optimal}, where the spectral Barron norms generalizing \eqref{C_g} are defined as some weighted $L^1$-norm of $\hat{u}$ or its discrete version, and the Barron space \cites{E19, Bach17} that generalizes \eqref{2layer-nn} into an integral representation with respect to an underlying probability measure on the parameter space and defines the Barron norm as path norm of the representation. With such function spaces specializing in high-dimensional problem, a natural question for the study of PDE is:
	
	\emph{If  the coefficients of a PDE lie in (spectral) Barron space, can the solution to the PDE also be guaranteed to be in (spectral) Barron space?}
	
	In this paper, we give a positive answer to the question above in the context of solving  the $d$-dimensional Schr\"odinger equation in the whole space:
	\begin{equation}\label{schrodinger_equ}
		-\Delta u + V u = f,\quad\text{in }\bR^d,
	\end{equation}
	Here  $V:\bR^d\gt\bR$ is the potential energy and $f:\bR^d\gt\bR$ is the source term. Morally speaking, our main result (Theorem~\ref{main_thm}) show that if $f$ is a spectral Barron function and $V$, with a non-negative lower bound on $\bR^d$, is the sum of a positive constant and a spectral Barron function, then the unique solution $u^*$ to \eqref{schrodinger_equ} is also in the spectral Barron space, with the order of spectral Barron regularity increased by $2$. We remark that a function in the spectral Barron spaces we consider has Fourier transform in $L^1(\bR^d)$, which implies that the function is bounded and decays to $0$ at infinity (by the Riemann-Lebesgue lemma). Therefore, Assumption \ref{assump_V}  on $V$  implies that $V$ is positive at  infinity, which guarantees the uniqueness of bounded solution and spectral Barron solution; see Proposition~\ref{injective_prop} for a precise statement.  An important consequence of Theorem~\ref{main_thm} is that there exists a two-layer neural network that approximates $u^\ast$ without curse of dimensionality; see Theorem~\ref{approx_thm}.
	
	\subsection{Related works} Regularity results of PDEs in Barron spaces have been studied in some recent works. In \cite{lu2021priori}, a solution theory for the Poisson equation and the Schr\"odinger equation on the bounded domain $\Omega=[0,1]^d$ with homogeneous Neumann boundary condition is establish in a type of spectral Barron space defined on $\Omega$ via cosine expansions. The same regularity result is later extended to the regularity estimate of ground state of the Schr\"odinger operator in \cite{lu2021priori2}. The work \cite{E20} proved regularity results for the screened Poisson equation $-\Delta u+\lambda^2 u=f$ and some time-dependent equations in the Barron space based on integral representation. 
	
	Besides the regularity estimates, another direction is to investigate the complexity of approximating PDE solutions using Barron functions or neural networks. It is obtained in \cite{chen2021representation} (representational) Barron complexity estimates for a general class of whole-space elliptic PDEs. A (deep) neural network complexity estimate for elliptic PDEs with homogeneous Dirichlet boundary condition is established in \cite{Marwah21}.
	
	\subsection{Notations} We use $\lvert x\rvert$ for the Euclidean norm of a vector $x\in \bR^d$ and use $B_r(x)=\{y\in \bR^d:\vert y-x\rvert<r\}$ for the open $\ell_2$ ball in $\bR^d$ centered at $x$ with radius $r$. For $i=1,2,\dots,d$, let $e_i\in\bR^d$ be the vector with the $i$-th entry being $1$ and other entries being $0$. For $g:\bR^d\gt\bR$, we denote by $\hat{g}$  its Fourier transform, given by 
	\begin{equation}\label{Fourier_trans}
		\hat{g}(\xi)=\frac{1}{(2\pi)^d}\int_{\bR^d} g(x)e^{-i x^\top \xi}dx.
	\end{equation}
	This is defined for $g\in L^1(\bR^d)$ and can be extended to tempered distributions. Note that  we  included a multiplicative constant $(2\pi)^{-d}$ in the definition \eqref{Fourier_trans} for the purpose of getting neater inverse Fourier transform:
	\begin{equation}\label{inv_Fourier_trans}
		g(x)=\int_{\bR^d} \hat{g}(x)e^{i x^\top\xi}d\xi.
	\end{equation}

	\section{Main results}
	
	Recall that regularity estimates for elliptic PDEs in Sobolev spaces are classical: Suppose that $V_{\min}\leq V(x)\leq V_{\max},\ \forall~x\in\bR^d$, for some $V_{\min},V_{\max}\in(0,+\infty)$, and that $f\in H^s(\bR^d)$ with $s\geq -1$, thanks to Lax-Milgram theorem, the Schr\"odinger equation \eqref{schrodinger_equ} admits a unique solution $u^*\in H^1(\bR^d)$. One can also obtain higher regularity for $u^*$ using  standard bootstrap argument. More specifically, if $u^*\in H^{s'}(\bR^d)$ and $V$ is sufficiently smooth, e.g., $V\in W^{s,\infty}(\bR^d)$, then it holds that
	\begin{equation*}
		(I-\Delta) u^* = f - u^*- V u^*\in H^{\min\{s',s\}}(\bR^d),
	\end{equation*}
	which implies that
	\begin{equation*}
		u^*=(I-\Delta)^{-1}(f - u^*- V u^*)\in H^{\min\{s',s\}+2}(\bR^d).
	\end{equation*}
	Therefore, one can conclude $u^*\in H^{s+2}(\bR^d)$ with
	\begin{equation*}
		\norm{u^*}_{H^{s+2}(\bR^d)}\leq C \norm{f}_{H^s(\bR^d)},
	\end{equation*}
	where $C$ is a constant depending on $V$, $d$, and $s$. Thus, $u^*$ has higher regularity than the source term $f$. Similar regularity results have also been studied for elliptic PDEs on bounded domains, see e.g., \cite{elliptic_PDE_fanghua}*{Theorem 5.27}.
	
	Our focus is to establish regularity results in the spectral Barron spaces for the Schr\"odinger equation \eqref{schrodinger_equ}. Let us first define the spectral Barron spaces as follows.
	
	\begin{definition}\label{def_spectral_Barron}
		Given $s\in\bR$, for a function $g:\bR^d\rightarrow\bR$, its spectral Barron norm is defined via
		\begin{equation*}
			\norm{g}_{\calB^s(\bR^d)}:=\int_{\bR^d} |\hat{g}(\xi)|\cdot(1+\lvert\xi\rvert^2)^{\frac{s}{2}} d\xi.
		\end{equation*}
		The spectral Barron space is the collection of  functions with finite spectral Barron norm:
		\begin{equation*}
			\calB^s(\bR^d)=\left\{g:\norm{g}_{\calB^s(\bR^d)}<\infty\right\}.
		\end{equation*}
	\end{definition}
	
	The spectral Barron space with index $s=1$ was first defined in the seminal work of Barron \cite{Barron93} and it has been further developed with general index $s$ in recent literature, see e.g., \cites{lu2021priori, siegel2020high, siegel2021optimal}. 
	Spectral Barron spaces are of particular interest for high-dimensional problems since a spectral Barron function can be efficiently approximated by a two-layer neural network without CoD, see e.g., \cite{Barron93} for approximation in $L^2$-norm and \cite{siegel2020high} for approximation in $H^k$-norm. A related but different notion of Barron space building upon integral representation was also proposed and studied in \cites{E19, Bach17}; see also \cites{parhi2021banach,parhi2021near} for a characterization of such space via Radon transform.
	
	Notice that by definition the spectral Barron space $\calB^s(\bR^d)$ is  a Banach space and the completeness follows from the fact that the spectral Barron norm is a weighted $L^1$-norm. A key difference between the spectral Barron space and the Sobolev space is that, $\norm{g}_{\calB^s(\bR^d)}$ is the $L^1$-norm of $\hat{g}(\xi)\cdot(1+\lvert\xi\rvert^2)^{s/2}$ while $\norm{g}_{H^s(\bR^d)}$ is the $L^2$-norm of $\hat{g}(\xi)\cdot(1+\lvert\xi\rvert^2)^{s/2}$. In particular, $\calB^s(\bR^d)$ is not a Hilbert space. The lack of Hilbert structure complicates the analysis of the existence and uniqueness of solutions in the spectral Barron space.

	To state our regularity theory for PDEs in spectral Barron spaces, we make the following assumption on  the potential $V$.
	
	\begin{assumption} \label{assump_V}
		Assume that the potential function $V$ satisfies the following:
		\begin{itemize}
			\item[(i)] $V(x)\geq 0,\ \forall~x\in\bR^d$;
			\item[(ii)] $V=\alpha+W$ where $\alpha>0$ is a constant and $W\in \calB^s(\bR^d)$. 
		\end{itemize}
	\end{assumption}

	Our main theorem can then be stated as follows:
	
	\begin{theorem}
		\label{main_thm}
		Suppose that Assumption~\ref{assump_V} holds with $s\geq 0$. Then for any $f\in\calB^s(\bR^d)$, there exists a unique solution $u^*$ in $\calB^{s+2}(\bR^d)$ and in addition it satisfies
		\begin{equation}\label{regularity_esti}
			\norm{u^*}_{\calB^{s+2}(\bR^d)}\leq C\norm{f}_{\calB^s(\bR^d)}.
		\end{equation}
		where $C$ is a constant depending on $V$, $d$, and $s$.
	\end{theorem}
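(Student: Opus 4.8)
The natural strategy is to work entirely on the Fourier side, where the Barron norm is a weighted $L^1$-norm, and to set up a fixed-point iteration. Writing $V = \alpha + W$, the equation $-\Delta u + Vu = f$ becomes $(-\Delta + \alpha)u = f - Wu$, i.e. $u = (-\Delta+\alpha)^{-1}(f - Wu) =: \mathcal{T}u$. The operator $(-\Delta+\alpha)^{-1}$ acts on the Fourier side as multiplication by $(|\xi|^2+\alpha)^{-1}$, which for $\alpha>0$ gains two derivatives: since $(1+|\xi|^2)/(|\xi|^2+\alpha) \leq \max\{1,\alpha^{-1}\}$, it maps $\calB^{s}(\bR^d)$ boundedly into $\calB^{s+2}(\bR^d)$ with norm at most $\max\{1,\alpha^{-1}\}$. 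Multiplication by $W$ corresponds to convolution of Fourier transforms, and a Young-type estimate gives $\|Wg\|_{\calB^s(\bR^d)} \leq C_{d,s}\|W\|_{\calB^s(\bR^d)}\|g\|_{\calB^s(\bR^d)}$ — this uses the elementary inequality $(1+|\xi|^2)^{s/2} \leq C_s\big((1+|\xi-\eta|^2)^{s/2} + (1+|\eta|^2)^{s/2}\big)$ for $s\geq 0$, and in fact one should prove the sharper bound $\|Wg\|_{\calB^s} \leq C_{d,s}\|W\|_{\calB^s}\|g\|_{\calB^{s}}$ (or a version mixing orders $s$ and $s+2$) that is the Barron analogue of the algebra property.

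First I would establish these two lemmas: the smoothing bound for $(-\Delta+\alpha)^{-1}$ and the product/multiplier estimate for $W$. Next, to close the argument I need $\mathcal{T}$ to be a contraction on $\calB^{s+2}(\bR^d)$, which requires $\max\{1,\alpha^{-1}\}\cdot C_{d,s}\|W\|_{\calB^s} < 1$. This smallness condition will \emph{not} hold in general, so the heart of the proof is to remove it. The idea is to exploit the sign condition $V\geq 0$: one does not iterate with $\alpha$ fixed but instead freezes the constant at the true average behavior, or more robustly, uses a continuation/continuity argument in a parameter. Concretely, I would consider the family $-\Delta + \alpha + t W$ for $t\in[0,1]$ and show the set of $t$ for which a Barron solution exists (with a uniform a priori bound) is open and closed in $[0,1]$. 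Openness comes from the contraction mapping / implicit function theorem using the already-established bounded inverse at a given $t$. Closedness and the uniform bound are the crux: here I would need an a priori estimate of the form $\|u\|_{\calB^{s+2}} \leq C\|f\|_{\calB^s}$ with $C$ independent of $t$, presumably derived by combining the elliptic gain with the maximum principle / positivity of $V$ (which via Proposition~\ref{injective_prop} controls $\|u\|_{L^\infty}$, and $\hat{u}(0)$, in terms of $f$), then bootstrapping: $\|u\|_{\calB^{s+2}} = \|(-\Delta+\alpha)^{-1}(f - tWu)\|_{\calB^{s+2}} \leq \max\{1,\alpha^{-1}\}(\|f\|_{\calB^s} + C_{d,s}\|W\|_{\calB^s}\|u\|_{\calB^{s}})$, and absorbing the lower-order term using interpolation $\|u\|_{\calB^s} \leq \varepsilon\|u\|_{\calB^{s+2}} + C_\varepsilon \|u\|_{\calB^0}$ together with an independent $\calB^0$ bound. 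Uniqueness in $\calB^{s+2}$ then follows from uniqueness of the bounded solution (Proposition~\ref{injective_prop}), since $\calB^{s+2} \hookrightarrow \calB^0 \hookrightarrow L^\infty$.

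The main obstacle, as indicated, is obtaining the \emph{uniform} a priori bound $\|u^*\|_{\calB^{s+2}}\leq C\|f\|_{\calB^s}$ that drives the continuation — in particular controlling the $\calB^0$ (equivalently $\widehat{u}\in L^1$) norm of the solution, since the Barron setting has no Hilbert/energy structure to lean on and Plancherel is unavailable. I expect this to require a careful argument combining: (a) the $L^\infty$ bound on $u^*$ from the sign of $V$ and the Feynman–Kac / maximum-principle representation, (b) using the equation rewritten as $u^* = (-\Delta+\alpha)^{-1}(f-Wu^*)$ to get $\widehat{u^*} \in L^1$ from $f, W \in \calB^0$ and $u^* \in L^\infty$ (bootstrapping the integrability of $\widehat{u^*}$ near and away from the origin separately, where the weight $(|\xi|^2+\alpha)^{-1}$ is harmless since $\alpha>0$), and (c) iterating this to climb from $\calB^0$ up to $\calB^{s+2}$. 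A secondary technical point is making the product estimate on convolutions quantitatively clean enough (possibly splitting the convolution into low- and high-frequency pieces of $W$) so the constants track correctly through the bootstrap.
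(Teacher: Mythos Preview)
Your preliminary lemmas (the $\calB^s \to \calB^{s+2}$ smoothing of $(\alpha-\Delta)^{-1}$ and the Barron product estimate) are correct and coincide with the paper's Lemmas~3.2 and~3.3. The divergence is in how you remove the smallness condition. You propose a continuation in $t\in[0,1]$ driven by a uniform a~priori bound, and you plan to produce that bound via the chain: maximum principle $\Rightarrow$ $L^\infty$-bound on $u$ $\Rightarrow$ $\calB^0$-bound on $u$ $\Rightarrow$ bootstrap to $\calB^{s+2}$. The weak link is the middle implication. Knowing $u\in L^\infty$ gives no quantitative control on $\widehat{u}$ in $L^1$: the Fourier transform of a bounded function is merely a tempered distribution, and neither $\hat W\in L^1$ nor the multiplier $(\alpha+|\xi|^2)^{-1}$ rescues this. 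Concretely, from $\widehat{u}=(\alpha+|\xi|^2)^{-1}(\hat f - t\,\hat W*\hat u)$ you only get $\|u\|_{\calB^0}\le \alpha^{-1}(\|f\|_{\calB^0}+t\|W\|_{\calB^0}\|u\|_{\calB^0})$, which is exactly the smallness condition $\|W\|_{\calB^0}<\alpha$ again. No splitting into low/high frequencies helps, since the obstruction is not a weight issue but the absence of any $L^1$-type Fourier information from an $L^\infty$ bound.

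The paper avoids this entirely by proving that $\calT_{\alpha,W}=(\alpha-\Delta)^{-1}(W\,\cdot\,)$ is \emph{compact} on $\calB^s(\bR^d)$ (via the Kolmogorov--Riesz criterion applied to the weighted $L^1$ family $\{(1+|\xi|^2)^{s/2}\widehat{\calT_{\alpha,W}u}:\|u\|_{\calB^s}\le 1\}$), and then invokes the Fredholm alternative: injectivity of $I+\calT_{\alpha,W}$ (Proposition~\ref{injective_prop}, which you correctly identify) automatically yields a bounded inverse, with no need for an explicit a~priori $\calB^0$ estimate. Compactness here genuinely uses $\hat W\in L^1$ --- the equicontinuity of translates of the convolution $\hat W*\hat u$ --- and is the substitute for the a~priori bound your continuation argument cannot supply. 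Once $(I+\calT_{\alpha,W})^{-1}$ is bounded on $\calB^s$, the upgrade to $\calB^{s+2}$ is the straightforward bootstrap you describe.
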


	As a direct corollary of Theorem \ref{main_thm},  the solution to the Schr\"odinger equation \eqref{schrodinger_equ} can be approximated efficiently by a two-layer neural network on any bounded domain.
	
	\begin{theorem}\label{approx_thm}
		Under the same assumptions of Theorem~\ref{main_thm} and let $C$ being the constant in Theorem~\ref{main_thm}. Then for any $f\in\calB^s(\bR^d)$, any bounded domain $\Omega\subset \bR^d$, and any $n\in\bNp$, there is a cosine-activated two-layer neural network with $n$ hidden neurons,
		\begin{equation*}
			u_n(x)=\frac{1}{n}\sum_{j=1}^n a_j\cos(w_j^\top x + b_j),
		\end{equation*}
		satisfying
		\begin{equation}\label{approx_equ}
			\norm{u_n-u^*}_{H^1(\Omega)}\leq \frac{C  \sqrt{m(\Omega)}\norm{f}_{\calB^s(\bR^d)}}{n^{1/2}},
		\end{equation}
		where $u^*$ is the unique solution to \eqref{schrodinger_equ} in $\calB^{s+2}(\bR^d)$ and $m(\Omega)$ is the Lebesgue measure of $\Omega$.
	\end{theorem}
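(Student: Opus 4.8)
The plan is to reduce the statement to a known approximation result for spectral Barron functions by cosine-activated two-layer networks, applied on the bounded domain $\Omega$. By Theorem~\ref{main_thm} the unique solution $u^*$ lies in $\calB^{s+2}(\bR^d)\subset\calB^1(\bR^d)$ (since $s\geq0$), and $\norm{u^*}_{\calB^1(\bR^d)}\leq\norm{u^*}_{\calB^{s+2}(\bR^d)}\leq C\norm{f}_{\calB^s(\bR^d)}$; this is the only regularity of $u^*$ the argument needs, because the target norm in \eqref{approx_equ} is only $H^1(\Omega)$.

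First I would write $u^*$ via its inverse Fourier transform $u^*(x)=\int_{\bR^d}\widehat{u^*}(\xi)e^{ix^\top\xi}\,\rd\xi$ and, using that $u^*$ is real-valued, symmetrize to obtain a representation $u^*(x)=\int_{\bR^d}\bigl(1+|\xi|^2\bigr)^{1/2}\,\rho(\rd\xi)\,\cos(\xi^\top x+\theta(\xi))$ for a suitable phase function $\theta$ and a probability measure $\rho$ on $\bR^d$ with total mass normalized by $\norm{u^*}_{\calB^1(\bR^d)}$; concretely $\rho$ has density proportional to $|\widehat{u^*}(\xi)|(1+|\xi|^2)^{1/2}$. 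Then $u^*(x)=\norm{u^*}_{\calB^1}\,\bE_{\xi\sim\rho}\bigl[\phi_\xi(x)\bigr]$ where $\phi_\xi(x)=\cos(\xi^\top x+\theta(\xi))$. The key point is that $\phi_\xi$ and its gradient $\nabla\phi_\xi(x)=-\xi\sin(\xi^\top x+\theta(\xi))$ are controlled on $\Omega$: $\norm{\phi_\xi}_{H^1(\Omega)}^2\leq\int_\Omega\bigl(1+|\xi|^2\bigr)\,\rd x=m(\Omega)(1+|\xi|^2)$, which is exactly matched by the weight $(1+|\xi|^2)^{1/2}$ already factored out, so the functions $g_\xi(x):=\norm{u^*}_{\calB^1}(1+|\xi|^2)^{1/2}\cos(\xi^\top x+\theta(\xi))$ satisfy a \emph{uniform} bound $\norm{g_\xi}_{H^1(\Omega)}\leq\norm{u^*}_{\calB^1}\sqrt{m(\Omega)}$ after renormalizing the measure $\rho$ to have mass $\norm{u^*}_{\calB^1}$.

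Next I would invoke a Maurey-type (Monte Carlo) sampling argument in the Hilbert space $H^1(\Omega)$: since $u^*$ is the average of a family of functions each of $H^1(\Omega)$-norm at most $R:=\norm{u^*}_{\calB^1(\bR^d)}\sqrt{m(\Omega)}$, drawing $w_1,\dots,w_n$ i.i.d.\ from the normalized measure and setting $u_n(x)=\frac1n\sum_{j=1}^n a_j\cos(w_j^\top x+b_j)$ with $a_j$, $b_j$ the corresponding amplitudes and phases gives $\bE\,\norm{u_n-u^*}_{H^1(\Omega)}^2\leq R^2/n$, hence there exists a realization with $\norm{u_n-u^*}_{H^1(\Omega)}\leq R/n^{1/2}=C\sqrt{m(\Omega)}\norm{f}_{\calB^s(\bR^d)}/n^{1/2}$, which is \eqref{approx_equ}. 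Absorbing the normalizing constant into the $a_j$'s gives the stated network form with the $1/n$ prefactor.

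The main obstacle — really the only nonroutine point — is making the integral representation rigorous: one must verify that $\widehat{u^*}\in L^1$ with the weight (which is $\norm{u^*}_{\calB^1}<\infty$), that the map $\xi\mapsto\cos(\xi^\top\cdot+\theta(\xi))$ is Bochner-measurable into $H^1(\Omega)$ so the Monte Carlo/Maurey lemma applies, and that the symmetrization using $\widehat{u^*}(-\xi)=\overline{\widehat{u^*}(\xi)}$ correctly produces real cosine terms with a well-defined phase $\theta(\xi)$. All of these are standard once $u^*\in\calB^1(\bR^d)$ is in hand from Theorem~\ref{main_thm}; alternatively, this corollary may simply be cited from the existing spectral Barron approximation literature (e.g.\ \cite{siegel2020high} for $H^1$-approximation), with the bound \eqref{approx_equ} following by tracking the explicit constants through that result together with \eqref{regularity_esti}.
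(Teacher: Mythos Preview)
Your overall strategy is the same as the paper's: write $u^*$ as a Fourier integral, interpret it as an expectation over a probability measure on frequency space, and apply a Monte Carlo/Maurey argument in the Hilbert space $H^1(\Omega)$ to produce the $n$-term cosine network. The paper carries this out with the sampling measure $\mu$ having density $|\widehat{u^*}|/\norm{u^*}_{\calB^0}$, so the atoms $\norm{u^*}_{\calB^0}\cos(\xi^\top x+\theta(\xi))$ are \emph{not} uniformly bounded in $H^1(\Omega)$; instead the variance is computed directly and bounded by $m(\Omega)\norm{u^*}_{\calB^0}\norm{u^*}_{\calB^2}/n$, after which $\norm{u^*}_{\calB^0}\leq\norm{u^*}_{\calB^2}\leq C\norm{f}_{\calB^s}$ finishes the estimate. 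Your variant --- putting the weight $(1+|\xi|^2)^{1/2}$ into the sampling density and appealing only to $\norm{u^*}_{\calB^1}$ --- is an equally valid route and arguably cleaner, since it yields uniformly bounded atoms and lets you quote the standard Maurey lemma verbatim.

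However, as written your formulas have the weight on the wrong side. If $\rho$ has density proportional to $|\widehat{u^*}(\xi)|(1+|\xi|^2)^{1/2}$, then the correct identity is
\[
u^*(x)=\norm{u^*}_{\calB^1}\,\bE_{\xi\sim\rho}\!\left[(1+|\xi|^2)^{-1/2}\cos(\xi^\top x+\theta(\xi))\right],
\]
so the atoms should be $g_\xi(x)=\norm{u^*}_{\calB^1}(1+|\xi|^2)^{-1/2}\cos(\xi^\top x+\theta(\xi))$, not with exponent $+1/2$. With your stated $g_\xi$ one has $\norm{g_\xi}_{H^1(\Omega)}^2\leq\norm{u^*}_{\calB^1}^2(1+|\xi|^2)^2 m(\Omega)$, which is unbounded in $\xi$, and moreover $\bE_\rho[g_\xi]$ does not equal $u^*$. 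With the corrected exponent the bound $\norm{g_\xi}_{H^1(\Omega)}\leq\norm{u^*}_{\calB^1}\sqrt{m(\Omega)}$ is genuine, the Maurey step goes through, and the resulting $a_j=\norm{u^*}_{\calB^1}(1+|w_j|^2)^{-1/2}$ fit the stated network form. Once this sign is fixed your argument is complete and essentially equivalent to the paper's.
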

	
	\begin{remark}
		The approximation in Theorem~\ref{approx_thm} is stated in $H^1$-norm. However, stronger results could be expected. Since $u^*\in\calB^{s+2}(\bR^d)$, it holds that $u^*\in H^{s+2}(\Omega)$, and approximation in $H^{s+2}$-norm could hold. We refer to \cite{siegel2020high} for details. 
	\end{remark}

	\section{Proofs}
	This section is devoted to the proof of Theorem~\ref{main_thm}.  Due to the lack of Hilbert structure in the spectral Barron space $\mathcal{B}^s(\mathbb{R}^d)$, the standard Lax-Milgram theorem used to prove well-posedness of elliptic equations in Sobolev spaces can not be applied to the Barron spaces. Instead, we follow \cite{lu2021priori} and rewrite \eqref{schrodinger_equ} as an integral equation of the second kind 
	\begin{equation}\label{integral_equ}
		u+\calT_{\alpha,W}(u) = (\alpha-\Delta)^{-1} f,
	\end{equation}
	where
	\begin{equation}\label{operator_T}
		\calT_{\alpha,W}(u)=(\alpha-\Delta)^{-1}(W u).
	\end{equation}
	Our approach is to apply the Fredholm alternative to the integral equation \eqref{integral_equ}, thus the existence of solution follows from its uniqueness.  To this end, the essential step is to prove  the compactness of $\calT_{\alpha, W}$. The compactness was established in \cite{lu2021priori} for PDEs on bounded domains, but it becomes more challenging for unbounded domains, e.g. the whole space in our setting. In fact, when $W=1$, it is well-known that $\calT_{\alpha,1} = (\alpha-\Delta)^{-1}$ with $\alpha>0$ is not compact on $L^2(\bR^d)$ since it has continuous spectrum. On the contrary, we shall show that  $\calT_{\alpha,W} $ is indeed compact on $\calB^s(\bR^d)$ provided that $\hat{W}\in L^1(\bR^d)$ that is implies by Assumption~\ref{assump_V} (ii) with $s\geq 0$. %\yl{I rewrote the paragraph above.}
	
	%Recall that we assume $V=\alpha+W$ in Assumption~\ref{assump_V}. Though our framework is the same as \cite{lu2021priori}, the techniques for proving the compactness of $\calT_{\alpha, W}$ that are crucial for the Fredholm alternative are different, which is the most difficult part in the whole proof.
	
	\subsection{Preliminary lemmas}
	
	We first present some preliminary lemmas for properties of the spectral Barron spaces and the operator $\calT_{\alpha,W}$.
	
	\begin{lemma}\label{prelim_lemma0}
		The following embeddings holds:
		\begin{itemize}
			\item[(i)] $\calB^0(\bR^d)\hookrightarrow L^\infty(\bR^d)$ with $\norm{g}_{L^\infty(\bR^d)}\leq\norm{g}_{\calB^0(\bR^d)}$.
			\item[(ii)] $\calB^{s'}(\bR^d)\hookrightarrow\calB^s(\bR^d)$ with $\norm{g}_{\calB^s(\bR^d)}\leq \norm{g}_{\calB^{s'}(\bR^d)}$ if $s\leq s'$.
		\end{itemize}
	\end{lemma}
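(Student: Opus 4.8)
Both embeddings are elementary consequences of the definition of the spectral Barron norm together with the inverse Fourier transform formula \eqref{inv_Fourier_trans}, so the proof is short; the only point requiring a word of care is the interpretation of the pointwise bound in (i).

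For part (ii), I would start from the observation that $1+\lvert\xi\rvert^2\geq 1$ for every $\xi\in\bR^d$, so that the map $t\mapsto(1+\lvert\xi\rvert^2)^{t/2}$ is nondecreasing in $t$. Hence $s\leq s'$ gives $(1+\lvert\xi\rvert^2)^{s/2}\leq(1+\lvert\xi\rvert^2)^{s'/2}$ pointwise, and integrating the inequality $\lvert\hat g(\xi)\rvert(1+\lvert\xi\rvert^2)^{s/2}\leq\lvert\hat g(\xi)\rvert(1+\lvert\xi\rvert^2)^{s'/2}$ over $\bR^d$ yields $\norm{g}_{\calB^s(\bR^d)}\leq\norm{g}_{\calB^{s'}(\bR^d)}$; in particular finiteness of the right-hand side forces finiteness of the left, which is exactly the claimed inclusion $\calB^{s'}(\bR^d)\hookrightarrow\calB^s(\bR^d)$.

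For part (i), note that $g\in\calB^0(\bR^d)$ means precisely $\hat g\in L^1(\bR^d)$ with $\norm{\hat g}_{L^1(\bR^d)}=\norm{g}_{\calB^0(\bR^d)}$. Then the inverse Fourier transform $\int_{\bR^d}\hat g(\xi)e^{ix^\top\xi}\rd\xi$ converges absolutely and defines a (bounded, continuous) function which agrees with $g$ a.e.; identifying $g$ with this representative, I would estimate directly
\begin{equation*}
    \lvert g(x)\rvert=\left\lvert\int_{\bR^d}\hat g(\xi)e^{ix^\top\xi}\rd\xi\right\rvert\leq\int_{\bR^d}\lvert\hat g(\xi)\rvert\rd\xi=\norm{g}_{\calB^0(\bR^d)}
\end{equation*}
for every $x\in\bR^d$, and take the supremum over $x$ to conclude $\norm{g}_{L^\infty(\bR^d)}\leq\norm{g}_{\calB^0(\bR^d)}$.

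\textbf{Main obstacle.} There is essentially no analytic difficulty here; the only subtlety is purely a matter of bookkeeping, namely justifying that the inverse Fourier transform formula holds pointwise (not merely as an $L^2$ or distributional identity) when $\hat g\in L^1(\bR^d)$, so that the $L^\infty$ bound is meaningful — this is standard and I would simply invoke it, noting that elements of $\calB^0(\bR^d)$ are tacitly taken to be their continuous representatives.
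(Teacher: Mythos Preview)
Your proposal is correct and follows exactly the approach indicated in the paper, which simply states that the embeddings follow directly from Definition~\ref{def_spectral_Barron} and \eqref{inv_Fourier_trans}. You have merely written out in full the two elementary estimates that the paper leaves implicit, including the harmless identification of $g$ with its continuous representative when $\hat g\in L^1(\bR^d)$.
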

	
	\begin{proof}
		They follow directly from Definition~\ref{def_spectral_Barron} and \eqref{inv_Fourier_trans}.
	\end{proof}
	
	\begin{lemma} \label{prelim_lemma1}
		If $\alpha>0$, then for any $g\in\calB^s(\bR^d)$, it holds that
		\begin{equation*}
			\norm{(\alpha-\Delta )^{-1} g}_{\calB^s(\bR^d)}\leq \frac{1}{\alpha}\norm{g}_{\calB^s(\bR^d)}.
		\end{equation*}
		and that
		\begin{equation*}
			\norm{(\alpha-\Delta )^{-1} g}_{\calB^{s+2}(\bR^d)}\leq \frac{1}{\min\{\alpha,1\}}\norm{g}_{\calB^s(\bR^d)}.
		\end{equation*}
	\end{lemma}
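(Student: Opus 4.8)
The plan is to pass to the Fourier side, where the operator $(\alpha-\Delta)^{-1}$ becomes multiplication by the symbol $(\alpha+|\xi|^2)^{-1}$, and then reduce both inequalities to elementary pointwise bounds on scalar multipliers that can be pulled out of the weighted $L^1$-integral defining the spectral Barron norm.

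First I would record that, since $\alpha>0$, the symbol $m(\xi)=(\alpha+|\xi|^2)^{-1}$ is a bounded smooth function, so $(\alpha-\Delta)^{-1}g$ is well-defined for $g\in\mathcal{B}^s(\bR^d)$ (whose Fourier transform is in $L^1$, hence in particular a finite measure/integrable function) and satisfies $\widehat{(\alpha-\Delta)^{-1}g}(\xi)=\frac{\hat g(\xi)}{\alpha+|\xi|^2}$. Plugging this into Definition~\ref{def_spectral_Barron} gives
\begin{equation*}
    \norm{(\alpha-\Delta)^{-1}g}_{\calB^{s}(\bR^d)}=\int_{\bR^d}\frac{(1+|\xi|^2)^{s/2}}{\alpha+|\xi|^2}\,|\hat g(\xi)|\ud\xi,
\end{equation*}
and likewise with exponent $s+2$ in place of $s$ in the first statement; these are the two identities everything rests on.

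For the first bound I would simply note $\alpha+|\xi|^2\ge\alpha$ for all $\xi$, so the integrand is bounded by $\frac1\alpha(1+|\xi|^2)^{s/2}|\hat g(\xi)|$, and integrating yields $\frac1\alpha\norm{g}_{\calB^s(\bR^d)}$. For the second bound I would write $(1+|\xi|^2)^{(s+2)/2}=(1+|\xi|^2)\,(1+|\xi|^2)^{s/2}$ and then check the pointwise inequality
\begin{equation*}
    \frac{1+|\xi|^2}{\alpha+|\xi|^2}\le\frac{1}{\min\{\alpha,1\}}\qquad\text{for all }\xi\in\bR^d.
\end{equation*}
This splits into two cases: if $\alpha\ge1$ then $\alpha+|\xi|^2\ge1+|\xi|^2$ so the ratio is $\le1=\min\{\alpha,1\}^{-1}$; if $\alpha<1$ then $\alpha(1+|\xi|^2)=\alpha+\alpha|\xi|^2\le\alpha+|\xi|^2$, so the ratio is $\le\frac1\alpha=\min\{\alpha,1\}^{-1}$. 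Substituting this bound into the integral gives exactly $\frac{1}{\min\{\alpha,1\}}\norm{g}_{\calB^s(\bR^d)}$.

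There is no substantive obstacle here; the only thing to be slightly careful about is the justification that the Fourier-side identity $\widehat{(\alpha-\Delta)^{-1}g}=m\hat g$ holds in the relevant sense (e.g.\ as tempered distributions, which suffices since both sides are then seen to be $L^1$ functions once one knows $\hat g\in L^1$), and that $m$ is a legitimate Fourier multiplier on this class. Everything else is an elementary estimate on the scalar symbol, carried out under the integral sign.
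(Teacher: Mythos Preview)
Your proposal is correct and follows exactly the same approach as the paper: pass to the Fourier side where $(\alpha-\Delta)^{-1}$ becomes multiplication by $(\alpha+|\xi|^2)^{-1}$, then use the pointwise bounds $\alpha+|\xi|^2\ge\alpha$ and $\frac{1+|\xi|^2}{\alpha+|\xi|^2}\le\frac{1}{\min\{\alpha,1\}}$ under the weighted $L^1$-integral. Your case split for the second inequality is more explicit than the paper's one-line statement, but the argument is identical in substance.
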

	
	\begin{proof}
		Denote $h=(\alpha-\Delta )^{-1} g$. Then $\hat{h}(\xi)=\frac{1}{\alpha +\lvert\xi\rvert^2} \hat{g}(\xi)$. One can hence compute that
		\begin{equation*}
			\norm{h}_{\calB^s(\bR^d)}=\int_{\bR^d} |\hat{h}(\xi)|\cdot(1+\lvert\xi\rvert^2)^{\frac{s}{2}}d\xi\leq \frac{1}{\alpha}\int_{\bR^d} |\hat{g}(\xi)|\cdot(1+\lvert\xi\rvert^2)^{\frac{s}{2}}d\xi=\frac{1}{\alpha}\norm{g}_{\calB^s(\bR^d)}.
		\end{equation*}
		and that
		\begin{align*}
			\norm{h}_{\calB^{s+2}(\bR^d)}&=\int_{\bR^d} |\hat{h}(\xi)|\cdot(1+\lvert\xi\rvert^2)^{\frac{s+2}{2}}d\xi=\int_{\bR^d} |\hat{g}(\xi)|\cdot\frac{1+\lvert\xi\rvert^2}{\alpha+\lvert\xi\rvert^2}\cdot(1+\lvert\xi\rvert^2)^{\frac{s}{2}}d\xi\\
			&\leq\frac{1}{\min\{\alpha,1\}}\int_{\bR^d} |\hat{g}(\xi)|\cdot(1+\lvert\xi\rvert^2)^{\frac{s}{2}}d\xi=\frac{1}{\min\{\alpha,1\}}\norm{g}_{\calB^s(\bR^d)}. %\qedhere
		\end{align*}
	\end{proof}
	
	\begin{remark}
		Since $g\in \calB^s(\bR^d)$ is a real-valued function, $(\alpha-\Delta)^{-1}g$ with $\alpha>0$ must also be real-valued. This is because that $-\Delta u + \alpha u = 0$ only has trivial solution in the space of tempered distributions, which can be seen directly by taking Fourier transform $\hat{u}=\frac{0}{\alpha+\lvert\xi\rvert^2}=0$.
	\end{remark}
	
	\begin{lemma}\label{prelim_lemma2}
		Suppose that $W\in\calB^s(\bR^d)$ with $s\geq 0$. Then for any $u\in\calB^s(\bR^d)$, it holds that
		\begin{equation*}
			\norm{W u}_{\calB^s(\bR^d)}\leq 2^{\frac{s}{2}}\norm{W}_{\calB^s(\bR^d)}\norm{u}_{\calB^s(\bR^d)}.
		\end{equation*}
	\end{lemma}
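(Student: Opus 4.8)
The plan is to move to the Fourier side, where pointwise multiplication turns into convolution, and then exploit an elementary submultiplicativity estimate for the weight $(1+|\xi|^2)^{s/2}$. With the normalization \eqref{Fourier_trans} one checks directly that $\widehat{Wu}=\hat W\ast\hat u$, that is, $\widehat{Wu}(\xi)=\int_{\bR^d}\hat W(\eta)\hat u(\xi-\eta)\ud\eta$. Since $s\ge 0$, Lemma~\ref{prelim_lemma0}(ii) gives $W,u\in\calB^0(\bR^d)$, hence $\hat W,\hat u\in L^1(\bR^d)$, so this convolution is well defined, lies in $L^1(\bR^d)$ by Young's inequality, and coincides with the Fourier transform of $Wu$ as a tempered distribution. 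In particular $\norm{Wu}_{\calB^s(\bR^d)}=\int_{\bR^d}|\hat W\ast\hat u(\xi)|\,(1+|\xi|^2)^{s/2}\ud\xi$ is the quantity to be bounded.

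The key elementary fact I would isolate first is that for $s\ge0$ and all $\eta,\zeta\in\bR^d$,
\begin{equation*}
  (1+|\eta+\zeta|^2)^{s/2}\ \le\ 2^{s/2}\,(1+|\eta|^2)^{s/2}\,(1+|\zeta|^2)^{s/2}.
\end{equation*}
This follows by raising to the power $s/2$ (which is monotone since $s\ge0$) the chain $1+|\eta+\zeta|^2\le 1+2|\eta|^2+2|\zeta|^2\le 2(1+|\eta|^2)(1+|\zeta|^2)$, where the first inequality is $|\eta+\zeta|^2\le2|\eta|^2+2|\zeta|^2$ and the second just discards the nonnegative term $2|\eta|^2|\zeta|^2$.

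Combining the two ingredients, I would bound the modulus of the convolution by the convolution of the moduli and then apply the bracket inequality with the substitution $\zeta=\xi-\eta$:
\begin{align*}
  \norm{Wu}_{\calB^s(\bR^d)}
  &=\int_{\bR^d}\Big|\int_{\bR^d}\hat W(\eta)\hat u(\xi-\eta)\ud\eta\Big|(1+|\xi|^2)^{s/2}\ud\xi\\
  &\le\int_{\bR^d}\int_{\bR^d}|\hat W(\eta)|\,|\hat u(\xi-\eta)|\,(1+|\xi|^2)^{s/2}\ud\eta\ud\xi\\
  &\le 2^{s/2}\int_{\bR^d}\int_{\bR^d}|\hat W(\eta)|(1+|\eta|^2)^{s/2}\,|\hat u(\xi-\eta)|(1+|\xi-\eta|^2)^{s/2}\ud\eta\ud\xi.
\end{align*}
Since the integrand is nonnegative, Tonelli's theorem lets me perform the $\xi$-integration first (a translation in $\xi$), and the double integral factors as $2^{s/2}\norm{W}_{\calB^s(\bR^d)}\norm{u}_{\calB^s(\bR^d)}$, which is exactly the claimed estimate.

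There is no real obstacle in this argument; the only points deserving a word of care are the identity $\widehat{Wu}=\hat W\ast\hat u$ (standard once $\hat W,\hat u\in L^1$) and the interchange of integrals (justified by nonnegativity), both routine. The constant $2^{s/2}$ is precisely the cost of the elementary bracket inequality and is not claimed to be optimal; for $s=0$ it reduces to the sharp bound $\norm{Wu}_{\calB^0}\le\norm{W}_{\calB^0}\norm{u}_{\calB^0}$.
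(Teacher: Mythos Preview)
Your proof is correct and follows essentially the same route as the paper: pass to the Fourier side where $\widehat{Wu}=\hat W\ast\hat u$, use the elementary submultiplicativity $(1+|\xi|^2)^{s/2}\le 2^{s/2}(1+|\eta|^2)^{s/2}(1+|\xi-\eta|^2)^{s/2}$, and factor the resulting double integral. Your write-up is slightly more careful in justifying the convolution identity and the interchange of integrals, but the argument is the same.
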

	
	\begin{proof}
		It follows from
		\begin{equation*}
			\widehat{Wu}(\eta)=\hat{W}\ast\hat{u}(\eta)=\int_{\bR^d} \hat{W}(\xi)\hat{u}(\eta-\xi)d\xi,
		\end{equation*}
		that
		\begin{align*}
			\norm{W u}_{\calB^s}&\leq \int_{\bR^d\times\bR^d} |\hat{W}(\xi)|\cdot|\hat{u}(\eta-\xi)|\cdot (1+\lvert\eta\rvert^2)^{\frac{s}{2}} d\xi d\eta\\
			&\leq \int_{\bR^d\times\bR^d} |\hat{W}(\xi)|\cdot|\hat{u}(\eta-\xi)|\cdot (1+2\lvert\xi\rvert^2+2\lvert\eta-\xi\rvert^2)^{\frac{s}{2}} d\xi d\eta\\
			& \leq 2^{\frac{s}{2}}\int_{\bR^d\times\bR^d} |\hat{W}(\xi)|\cdot|\hat{u}(\eta-\xi)|\cdot (1+\lvert\xi\rvert^2)^{\frac{s}{2}}\cdot (1+\lvert\eta-\xi\rvert^2)^{\frac{s}{2}} d\xi d\eta\\
			& =2^{\frac{s}{2}}\int_{\bR^d} |\hat{W}(\xi)|\cdot(1+\lvert\xi\rvert^2)^{\frac{s}{2}} d\xi \int_{\bR^d} |\hat{u}(\xi)|\cdot(1+\lvert\xi\rvert^2)^{\frac{s}{2}} d\xi\\
			& = 2^{\frac{s}{2}}\norm{W}_{\calB^s}\norm{u}_{\calB^s}. %\qedhere
		\end{align*}
	\end{proof}
	
	\medskip 
	
	\subsection{Compactness of $\calT_{\alpha,W}$}
	
	Lemma~\ref{prelim_lemma1} and Lemma~\ref{prelim_lemma2} imply that the operator $\calT_{\alpha,W}$ defined in \eqref{operator_T} is bounded in $\calB^s(\bR^d)$ if $\alpha>0$ and $W\in\calB^s(\bR^d)$ with $s\geq 0$. We now show that this operator is compact with a more careful analysis.
	
	\begin{proposition}\label{compact_prop}
		Suppose that $\alpha>0$ and $W\in\calB^s(\bR^d)$ with $s\geq 0$. Then the operator $\calT_{\alpha, W}:\calB^s(\bR^d)\gt\calB^s(\bR^d)$ defined in \eqref{operator_T} is compact.
	\end{proposition}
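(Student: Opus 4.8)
The plan is to show that $\calT_{\alpha,W}$ is the norm-limit (in the operator norm on $\calB^s(\bR^d)$) of a sequence of compact operators, hence compact. The natural approximation is to truncate $W$ in frequency. For $R>0$ define $W_R$ by $\widehat{W_R} = \hat{W}\cdot\mathbf{1}_{B_R(0)}$, and set $\calT_{\alpha,W_R}(u) = (\alpha-\Delta)^{-1}(W_R u)$. By Lemma~\ref{prelim_lemma1} and Lemma~\ref{prelim_lemma2},
\[
  \norm{(\calT_{\alpha,W}-\calT_{\alpha,W_R})(u)}_{\calB^s(\bR^d)}
  \leq \frac{1}{\alpha}\norm{(W-W_R)u}_{\calB^s(\bR^d)}
  \leq \frac{2^{s/2}}{\alpha}\norm{W-W_R}_{\calB^s(\bR^d)}\norm{u}_{\calB^s(\bR^d)},
\]
and $\norm{W-W_R}_{\calB^s(\bR^d)} = \int_{|\xi|\geq R}|\hat{W}(\xi)|(1+|\xi|^2)^{s/2}\,d\xi \to 0$ as $R\to\infty$ by dominated convergence, since $W\in\calB^s(\bR^d)$. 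So it suffices to prove each $\calT_{\alpha,W_R}$ is compact.

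For the compactness of $\calT_{\alpha,W_R}$ I would combine a frequency truncation on the \emph{output} side with an equicontinuity/tightness argument on the Fourier side. Concretely, let $(u_k)$ be a bounded sequence in $\calB^s(\bR^d)$; writing $v_k = W_R u_k$, the convolution formula $\hat{v}_k = \widehat{W_R}\ast\hat{u}_k$ together with $\mathrm{supp}\,\widehat{W_R}\subset B_R(0)$ gives good control: the measures $|\hat v_k(\eta)|(1+|\eta|^2)^{s/2}\,d\eta$ are uniformly bounded in total mass, and—crucially—$\calT_{\alpha,W_R}(u_k)$ has Fourier transform $\frac{1}{\alpha+|\eta|^2}\hat v_k(\eta)$, which carries an extra decay factor. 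I would then show that the family $\{\calT_{\alpha,W_R}(u_k)\}$ is (i) tight, i.e. the tails $\int_{|\eta|\geq M}|\widehat{\calT_{\alpha,W_R}(u_k)}(\eta)|(1+|\eta|^2)^{s/2}\,d\eta$ are small uniformly in $k$ for large $M$, using that $\frac{(1+|\eta|^2)^{s/2}}{\alpha+|\eta|^2}\lesssim (1+|\eta|)^{s-2}$ decays while the mass of $|\hat v_k|(1+|\cdot|^2)^{s/2}$ stays bounded; and (ii) equi-integrable / "equicontinuous in the translation sense" on compact frequency sets, because $\hat v_k = \widehat{W_R}\ast \hat u_k$ is a convolution against the fixed $L^1$ (indeed $L^1\cap L^\infty$, being the transform of a compactly supported function) kernel $\widehat{W_R}$, which smooths $\hat u_k$. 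Passing to a subsequence with $\hat u_k \rightharpoonup \mu$ in an appropriate weak-$\ast$ sense and invoking tightness, one extracts $L^1(\bR^d)$-convergence of $\widehat{\calT_{\alpha,W_R}(u_k)}\cdot(1+|\cdot|^2)^{s/2}$, which is exactly $\calB^s$-convergence. This is essentially a Fréchet–Kolmogorov (Kolmogorov–Riesz) compactness criterion in $L^1$ applied to the transformed sequence.

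The main obstacle is step (ii): establishing the equi-integrability of $\hat v_k\cdot(1+|\cdot|^2)^{s/2}$ on bounded frequency sets purely from $\norm{u_k}_{\calB^s}\leq 1$. A bounded $\calB^s$ norm only controls the \emph{total variation} of $|\hat u_k|(1+|\cdot|^2)^{s/2}\,d\eta$ as a measure, not an $L^1$ density, so $\hat u_k$ itself need not be equi-integrable—hence the need to exploit the convolution with $\widehat{W_R}$, which is bounded and continuous, to regularize. I expect the cleanest route is: fix $R$, note $\widehat{W_R}\in L^\infty$, so $\|\hat v_k\|_{L^\infty}\leq \|\widehat{W_R}\|_{L^\infty}\|\hat u_k\|_{L^1}\leq \|\widehat{W_R}\|_{L^\infty}\norm{u_k}_{\calB^0}$ is uniformly bounded, which upgrades the uniform mass bound on $|\hat v_k|(1+|\cdot|^2)^{s/2}$ to a uniform $L^1\cap L^\infty$ bound on this density on any bounded set, and then a further convolution-smoothness estimate (continuity of translation applied to $\widehat{W_R}\ast\hat u_k$) yields uniform modulus of continuity in the $L^1$ sense. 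With tightness from step (i) in hand, Kolmogorov–Riesz then closes the argument. Alternatively, one can bypass Kolmogorov–Riesz by factoring $\calT_{\alpha,W_R}$ through a bounded set of functions with uniformly integrable and tight Fourier transforms and arguing directly; either way the crux is converting the measure-level bound into an $L^1$-compactness statement using the mollifying effect of $\widehat{W_R}$.
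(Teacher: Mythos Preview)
Your overall strategy coincides with the paper's: pass to the Fourier side and apply the Kolmogorov--Riesz criterion in $L^1(\bR^d)$ to the family $\{\widehat{\calT_{\alpha,W}(u)}\cdot(1+|\cdot|^2)^{s/2}:\|u\|_{\calB^s}\le 1\}$. The extra outer layer you add---approximating $\calT_{\alpha,W}$ in operator norm by $\calT_{\alpha,W_R}$---is correct but not needed; the paper verifies Kolmogorov--Riesz for $\calT_{\alpha,W}$ directly and only introduces an $L^1$-approximation of $\hat W$ (by a $C_c^\infty$ function $\varphi$) inside the equicontinuity step.

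There are, however, two concrete errors in your details. First, the claim $\widehat{W_R}\in L^\infty$ is false in general. By construction $\widehat{W_R}=\hat W$ on $B_R(0)$, and the hypothesis $W\in\calB^s(\bR^d)$ only yields $\hat W\in L^1(\bR^d)$; nothing forces $\hat W$ to be bounded (we do not assume $W\in L^1$, and your parenthetical ``being the transform of a compactly supported function'' has the roles reversed---it is $\widehat{W_R}$, not $W_R$, that is compactly supported). Hence the route to equicontinuity via a uniform $L^\infty$ bound on $\hat v_k$ does not work. The repair is the one you also allude to: push the translation onto the fixed kernel,
\[
\hat v_k(\cdot+y)-\hat v_k=\bigl(\widehat{W_R}(\cdot+y)-\widehat{W_R}\bigr)\ast\hat u_k,
\qquad
\|\hat v_k(\cdot+y)-\hat v_k\|_{L^1}\le\|\widehat{W_R}(\cdot+y)-\widehat{W_R}\|_{L^1}\,\|\hat u_k\|_{L^1},
\]
and use continuity of translation in $L^1$ for the single function $\widehat{W_R}$. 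This is exactly the mechanism the paper uses (with $\hat W$ replaced by $\varphi\in C_c^\infty$ to make the translation estimate explicit).

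Second, your tightness justification as written fails for $s\ge 2$: the factor $(1+|\eta|)^{s-2}$ does not decay then. The correct split keeps the $\calB^s$-weight attached to $\hat v_k$ and extracts only the genuinely decaying piece $\tfrac{1}{\alpha+|\eta|^2}$:
\[
\int_{|\eta|\ge M}\frac{|\hat v_k(\eta)|}{\alpha+|\eta|^2}\,(1+|\eta|^2)^{s/2}\,d\eta
\le\frac{1}{\alpha+M^2}\,\|W_R u_k\|_{\calB^s(\bR^d)}
\le\frac{2^{s/2}}{\alpha+M^2}\,\|W_R\|_{\calB^s}\|u_k\|_{\calB^s},
\]
which is uniformly small by Lemma~\ref{prelim_lemma2}. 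With these two fixes your argument goes through and, modulo the harmless operator-approximation wrapper, matches the paper's proof.
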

	
	%Let us define the closed unit ball in the spectral Barron space by \jl{do we want to put a $s$ index somehow?} \jl{the definition for $X$ is also quite unnecessary, just replace the condition $u \in X$ to $\| u\| \leq 1$, which is more clear?}
	% \begin{equation}\label{unit_ball_X}
		%     X=\left\{u:\norm{u}_{\calB^s(\bR^d)}=\int_{\bR^d} |\hat{u}(\xi)|\cdot (1+\lvert\xi\rvert^2)^{\frac{s}{2}}d\xi\leq 1\right\}.
		% \end{equation}
	To prove that $\calT_{\alpha, W}:\calB^s(\bR^d)\gt\calB^s(\bR^d)$ is compact, it suffices to show that the image of the closed unit ball in $\calB^s(\bR^d)$,
	\begin{equation}\label{TX}
		\left\{\calT_{\alpha, W} (u): \norm{u}_{\calB^s(\bR^d)}\leq 1\right\},
	\end{equation}
	is relatively compact in $\calB^s(\bR^d)$. Notice that $\calB^s(\bR^d)$ is complete, which implies that a subset of $\calB^s(\bR^d)$ is relatively compact if and only if it is totally bounded. Therefore, it suffices to prove the total boundedness of
	\begin{equation}\label{funct_class_F}
		\calF:=\left\{\widehat{\calT_{\alpha, W} (u)}(\xi)\cdot (1+\lvert\xi\rvert^2)^{\frac{s}{2}}: \norm{u}_{\calB^s(\bR^d)}\leq 1\right\}\subset L^1(\bR^d),
	\end{equation}
	where we translate the $\calB^s$-norm into the usual $L^1$-norm. The following Kolmogorov-Riesz theorem will be useful for establishing the total boundedness.
	
	\begin{theorem}[Kolmogorov-Riesz theorem {\cite{hanche2010kolmogorov}*{Theorem 5}}]\label{Kolmogorov_Riesz_thm}
		For $p\in [1,\infty)$, a subset $\calF \subset L^p(\bR^d)$ is totally bounded if and only if the following three conditions hold:
		\begin{itemize}
			\item[(i)] $\calF$ is bounded;
			\item[(ii)] For any $\epsilon>0$, there exists $R>0$ such that
			\begin{equation*}
				\int_{\lvert x\rvert>R} |f(x)|^p dx<\epsilon^p,\quad \forall\ f\in \calF;
			\end{equation*}
			\item[(iii)] For any $\epsilon>0$, there exists $\delta>0$, such that
			\begin{equation*}
				\int_{\bR^d} |f(x+y)-f(x)|^p dx<\epsilon^p,\quad \forall\ f\in\calF,\ \lvert y\rvert<\delta.
			\end{equation*}
		\end{itemize}
	\end{theorem}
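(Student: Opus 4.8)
The plan is to prove the two implications of the equivalence separately, as they call for rather different arguments, and to treat $p\in[1,\infty)$ uniformly. For the forward implication (total boundedness $\Rightarrow$ (i)--(iii)) I would run a standard three-$\epsilon$ argument: (i) is immediate since a totally bounded set is bounded; for (ii) and (iii), fix $\epsilon>0$, take a finite $(\epsilon/3)$-net $f_1,\dots,f_N\subset\calF$, observe that each $f_j\in L^p(\bR^d)$ has $L^p$-mass $<\epsilon/3$ outside a ball $B_{R_j}$ and---by continuity of translation in $L^p$, valid for $p<\infty$---translation increment $<\epsilon/3$ for $|y|<\delta_j$, then set $R=\max_j R_j$, $\delta=\min_j\delta_j$, and transfer both bounds to an arbitrary $f\in\calF$ by the triangle inequality, using translation-invariance of Lebesgue measure so that $\norm{\tau_y f-\tau_y f_j}_{L^p}=\norm{f-f_j}_{L^p}$, where $\tau_y g:=g(\cdot+y)$. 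This direction is routine.

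The substantive direction is the converse. The central device is the ball-averaging operator $A_r f:=f*\psi_r$ with $\psi_r:=|B_r|^{-1}\mathbf 1_{B_r}$ a probability density, so $(A_rf)(x)=|B_r|^{-1}\int_{B_r(x)}f(y)\,dy$. I would record three facts: (a) $A_r$ is a contraction on $L^p(\bR^d)$, and Minkowski's integral inequality gives $\norm{A_r f-f}_{L^p}\le\sup_{|z|\le r}\norm{\tau_z f-f}_{L^p}=:\omega(r)$, where by (iii) the quantity $\omega(r)$---taken as a supremum over $\calF$ as well---tends to $0$ as $r\to0^+$; (b) by H\"{o}lder, $\norm{A_r f}_{L^\infty}\le|B_r|^{-1/p}\norm{f}_{L^p}$, so $A_r$ maps the bounded family $\calF$ into a family bounded in $L^\infty$; (c) if $\norm{g}_{L^\infty}\le K$ then $|(A_{r'}g)(x)-(A_{r'}g)(x')|\le|B_{r'}|^{-1}K\,|B_{r'}(x)\triangle B_{r'}(x')|$, and the volume of this symmetric difference depends only on $|x-x'|$ and vanishes as $x'\to x$, so $\{A_{r'}g:\norm{g}_{L^\infty}\le K\}$ is uniformly equicontinuous.

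Granting these, the argument proceeds as follows. Given $\epsilon>0$, use (ii) to pick $R$ with $\int_{|x|>R}|f|^p<\epsilon^p$ for all $f\in\calF$, and use (iii) to pick $r,r'>0$ with $\omega(r),\omega(r')<\epsilon$. Put $g_f:=A_{r'}A_r f$ and $M:=\sup_{f\in\calF}\norm{f}_{L^p}$. By (b) the inner average is bounded in $L^\infty$ by $|B_r|^{-1/p}M$, so by (c) the family $\{g_f|_{\overline{B_R}}:f\in\calF\}$ is uniformly bounded and equicontinuous on $\overline{B_R}$, hence relatively compact in $C(\overline{B_R})$ by Arzel\`{a}--Ascoli and therefore, since $B_R$ has finite measure, relatively compact---equivalently totally bounded---in $L^p(B_R)$. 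Meanwhile, using $\tau_z(A_rf)=A_r(\tau_zf)$ and the $L^p$-contractivity of $A_r$, one gets $\norm{A_{r'}(A_rf)-A_rf}_{L^p}\le\omega(r')$, whence $\norm{g_f-f}_{L^p(\bR^d)}\le\omega(r')+\omega(r)<2\epsilon$. Finally, pick a finite $\epsilon$-net $h_1,\dots,h_N$ for $\{g_f|_{B_R}\}$ in $L^p(B_R)$ and set $\tilde h_k:=h_k\mathbf 1_{B_R}$; for $f\in\calF$, choosing $k$ with $\norm{g_f-h_k}_{L^p(B_R)}<\epsilon$ yields $\norm{f-h_k}_{L^p(B_R)}<3\epsilon$, which combined with the tail bound $\int_{|x|>R}|f|^p<\epsilon^p$ gives $\norm{f-\tilde h_k}_{L^p(\bR^d)}=O(\epsilon)$. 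Thus $\calF$ has a finite $O(\epsilon)$-net for every $\epsilon>0$ and is totally bounded.

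The main obstacle is the equicontinuity required to invoke Arzel\`{a}--Ascoli in the converse direction: a single averaging $A_rf$ need not be equicontinuous uniformly over $\calF$ when $p=1$ (the natural H\"{o}lder estimate against the symmetric-difference volume degenerates), which is exactly why one averages twice---the inner $A_r$ supplies a uniform $L^\infty$ bound, and the outer $A_{r'}$ then acquires a modulus of continuity governed solely by $|B_{r'}(x)\triangle B_{r'}(x')|$, uniformly in $f$. The secondary point needing care is the reassembly: one must work on the finite-measure ball $B_R$ and glue back to $\bR^d$, absorbing the tails via condition (ii), and check that the three operations---truncate, average, net---are applied in an order keeping all error terms simultaneously small.
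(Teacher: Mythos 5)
Your proof is correct. Note, however, that the paper does not prove this statement at all: Theorem~\ref{Kolmogorov_Riesz_thm} is imported verbatim from \cite{hanche2010kolmogorov}*{Theorem 5} as a known tool, so there is no in-paper argument to compare against. Measured against the cited source, your route differs in one interesting way. Hanche-Olsen and Holden establish sufficiency by covering a large ball with finitely many small cubes, replacing each $f$ by its vector of cube averages, and invoking an elementary lemma (a totally bounded set is one admitting, for each $\epsilon$, a finite cover by sets of diameter $\le\epsilon$) applied to a bounded subset of a finite-dimensional space --- no Arzel\`a--Ascoli needed. You instead mollify by the normalized ball indicator and pass through $C(\overline{B_R})$ via Arzel\`a--Ascoli; the double averaging $A_{r'}A_r$ is exactly the right fix for the $p=1$ degeneracy of the H\"older/symmetric-difference estimate, and your error bookkeeping (contraction, $\norm{A_rf-f}_{L^p}\le\omega(r)$ via Minkowski's integral inequality, commutation $\tau_z A_r=A_r\tau_z$, tail absorption by condition (ii)) is all sound. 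The cube-average proof is more elementary and generalizes more readily to metric measure settings; yours is arguably more transparent analytically and reuses machinery (convolution, equicontinuity) already standard in this context. One cosmetic remark: a single averaging would in fact suffice even for $p=1$ if you estimate $|A_rf(x+y)-A_rf(x)|\le |B_r|^{-1}\norm{\tau_yf-f}_{L^1}$ directly from condition (iii) rather than from boundedness alone, but your two-step construction is perfectly valid.
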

	
	We then prove Proposition~\ref{compact_prop} using Theorem~\ref{Kolmogorov_Riesz_thm}.
	
	\begin{proof}[Proof of Proposition~\ref{compact_prop}]
		As discussed above, the compactness of $\calT_{\alpha, W}$ follows from the total boundedness of $\calF$ defined in \eqref{funct_class_F}. Therefore, it suffices to verify the three conditions in Theorem~\ref{Kolmogorov_Riesz_thm} for $\calF$. We verify them one by one. %\jl{probably not necessary to separte into three lemmas, could just use a proof with three bullets}

		%According to Lemma~\ref{lem_condition1}, Lemma~\ref{lem_condition2}, and Lemma~\ref{lem_condition3}, the three conditions in Theorem~\ref{Kolmogorov_Riesz_thm} are satisfied for $\calF\subset L^1(\bR^d)$ defined in \eqref{funct_class_F}. Then by Theorem~\ref{Kolmogorov_Riesz_thm}, $\calF\subset L^1(\bR^d)$ is totally bounded, which implies the total boundedness and hence the relative compactness of $\calT_{\alpha, W}(X)\subset\calB^s(\bR^d)$ defined in \eqref{unit_ball_X} and \eqref{TX}. Therefore, $\calT_{\alpha,W}:\calB^s(\bR^d)\gt\calB^s(\bR^d)$ is a compact operator.
		
		$\bullet$ \emph{Verification of Condition (i) in Theorem~\ref{Kolmogorov_Riesz_thm}:} For any $\norm{u}_{\calB^s(\bR^d)}\leq 1$, since
		\begin{equation*}
			\widehat{\calT_{\alpha, W} (u)}(\xi)=\frac{1}{\alpha+ \lvert\xi\rvert^2}\widehat{Wu}(\xi),
		\end{equation*}
		we have that
		\begin{equation*}
			\begin{split}
				\norm{\widehat{\calT_{\alpha,W} (u)}(\xi)\cdot(1+\lvert\xi\rvert^2)^{\frac{s}{2}}}_{L^1(\bR^d)}&\leq \norm{(\alpha-\Delta)^{-1} Wu}_{\calB^s(\bR^d)}\\
				&\leq \frac{1}{\alpha}\norm{Wu}_{\calB^s(\bR^d)}\\
				&\leq\frac{2^{\frac{s}{2}}}{\alpha}\norm{W}_{\calB^s(\bR^d)}\norm{u}_{\calB^s(\bR^d)}\\
				&\leq\frac{2^{\frac{s}{2}}}{\alpha}\norm{W}_{\calB^s(\bR^d)},
			\end{split}
		\end{equation*}
		%\jl{$\calB^1$ or $\calB^s$ above?}
		where we used Lemma~\ref{prelim_lemma1} and Lemma~\ref{prelim_lemma2}. Therefore, $\calF$ is bounded in $L^1(\bR^d)$.
		
		$\bullet$ \emph{Verification of Condition (ii) in Theorem~\ref{Kolmogorov_Riesz_thm}:} For any $\epsilon>0$, there exists $R>0$ such that $\frac{1}{\alpha+\lvert\xi\rvert^2}<\epsilon$ for any $\lvert\xi\rvert>R$. Then for any $\norm{u}_{\calB^s(\bR^d)}\leq 1$, it holds that
		\begin{equation*}
			\begin{split}
				\int_{\lvert\xi\rvert>R}|\widehat{\calT_{\alpha, W} (u)}(\xi)|\cdot(1+\lvert\xi\rvert^2)^{\frac{s}{2}}d\xi&\leq \int_{\lvert\xi\rvert>R}\frac{1}{\alpha+\lvert\xi\rvert^2}\cdot |\widehat{W u}(\xi)|\cdot (1+\lvert\xi\rvert^2)^{\frac{s}{2}}d\xi\\
				&\leq \epsilon\int_{\lvert\xi\rvert>R}|\widehat{W u}(\xi)|\cdot (1+|\xi|^2)^{\frac{s}{2}}d\xi\\
				&\leq \epsilon\norm{Wu}_{\calB^s(\bR^d)}\\
				&\leq\epsilon 2^{\frac{s}{2}}\norm{W}_{\calB^s(\bR^d)}\norm{u}_{\calB^s(\bR^d)}\\
				&\leq\epsilon2^{\frac{s}{2}}\norm{W}_{\calB^s(\bR^d)},
			\end{split}
		\end{equation*}
		where we also used Lemma~\ref{prelim_lemma2}.

		$\bullet$ \emph{Verification of Condition (iii) in Theorem~\ref{Kolmogorov_Riesz_thm}:} Since Condition (ii) in Theorem~\ref{Kolmogorov_Riesz_thm} holds, for any $\epsilon>0$, there exists $R>0$ such that
		\begin{equation}\label{esti1}
			\int_{\lvert\xi\rvert> R} |\widehat{\calT_{\alpha,W}(u)}(\xi)|\cdot (1+\lvert\xi\rvert^2)^{\frac{s}{2}} d\xi\leq \epsilon,\quad\forall\ \norm{u}_{\calB^s(\bR^d)}\leq 1.
		\end{equation}
		Set
		\begin{equation}\label{L1L2}
			L_1:=\max_{\lvert\xi\rvert\leq 2R}\frac{(1+\lvert\xi\rvert^2)^{\frac{s}{2}}}{\alpha+ \lvert\xi\rvert^2}\quad\text{and}\quad L_2:=\int_{\lvert\xi\rvert\leq 2R}d\xi.
		\end{equation}
		It follows from $W\in \calB^s(\bR^d)$ with $s\geq 0$ that $\hat{W}\in L^1(\bR^d)$. According to \cite{folland1999real}*{Proposition 8.17}, there exists $\varphi\in C_c^{\infty}(\bR^d)$ satisfying 
		\begin{equation}\label{esti9}
			\norm{\hat{W} - \varphi}_{L^1(\bR^d)}\leq \frac{\eps}{L_1}.
		\end{equation}
		Note that $\xi\mapsto\frac{(1+\lvert\xi\rvert^2)^{\frac{s}{2}}}{\alpha+\lvert\xi\rvert^2}$ is continuous, and is hence uniformly continuous on any compact subsets of $\bR^d$. One also has that $\varphi\in C_c^{\infty}(\bR^d)$ is uniformly continuous on $\bR^d$. Thus, there exists some $\delta<R$, such that
		\begin{equation}\label{esti3}
			\left|\frac{(1+\lvert\xi\rvert^2)^{\frac{s}{2}}}{\alpha+\lvert\xi\rvert^2}-\frac{(1+\lvert\xi'\rvert^2)^{s/2}}{\alpha+\lvert\xi'\rvert^2}\right|\leq \epsilon,\quad \forall~\lvert\xi\rvert\leq 3R,\ \lvert\xi'\rvert\leq 3R,\ \lvert\xi-\xi'\rvert<\delta,
		\end{equation}
		and that
		\begin{equation}\label{esti4}
			|\varphi(\xi)-\varphi(\xi')|<\frac{\epsilon}{L_1 L_2},\quad \forall~\xi,\xi'\in\bR^d,\ \lvert\xi-\xi'\rvert<\delta.
		\end{equation}
		Consider any $\lvert y\rvert<\delta<R$ and any $\norm{u}_{\calB^s(\bR^d)}\leq 1$, we have that
		\begin{equation}\label{esti8}
			\begin{split}
				&\int_{\bR^d}\left|\widehat{\calT_{\alpha,W} (u)}(\xi+y)\cdot (1+\lvert\xi+y\rvert^2)^{\frac{s}{2}}-\widehat{\calT_{\alpha,W} (u)}(\xi)\cdot (1+\lvert\xi\rvert^2)^{\frac{s}{2}}\right|d\xi\\
				\leq & \int_{\lvert\xi\rvert>2R}|\widehat{\calT_{\alpha,W}(u)}(\xi+y)|\cdot (1+\lvert\xi+y\rvert^2)^{\frac{s}{2}}d\xi\\
				&\qquad +\int_{\lvert\xi\rvert>2 R}|\widehat{\calT_{\alpha,W} (u)}(\xi)|\cdot (1+\lvert\xi\rvert^2)^{\frac{s}{2}}d\xi\\
				&\qquad +\int_{\lvert\xi\rvert\leq 2R}\left|\widehat{\calT_{\alpha,W} (u)}(\xi+y)\cdot (1+\lvert\xi+y\rvert^2)^{\frac{s}{2}}-\widehat{\calT_{\alpha,W} (u)}(\xi)\cdot (1+\lvert\xi\rvert^2)^{\frac{s}{2}}\right|d\xi\\
				\leq & 2\int_{\lvert\xi\rvert> R}|\widehat{\calT_{\alpha,W} (u)}(\xi)|\cdot (1+\lvert\xi\rvert^2)^{\frac{s}{2}}d\xi\\
				&\qquad+\int_{\lvert\xi\rvert\leq 2R}\left|\widehat{\calT_{\alpha,W} (u)}(\xi+y)\cdot (1+\lvert\xi+y\rvert^2)^{\frac{s}{2}}-\widehat{\calT_{\alpha,W} (u)}(\xi)\cdot (1+\lvert\xi\rvert^2)^{\frac{s}{2}}\right|d\xi\\
				\leq & 2\epsilon+\int_{\lvert\xi\rvert\leq 2R}\left|\widehat{\calT_{\alpha,W} (u)}(\xi+y)\cdot (1+\lvert\xi+y\rvert^2)^{\frac{s}{2}}-\widehat{\calT_{\alpha,W} (u)}(\xi)\cdot (1+\lvert\xi\rvert^2)^{\frac{s}{2}}\right|d\xi,
			\end{split}
		\end{equation}
		where the last inequality follows from \eqref{esti1}. Then we estimate the second term in the last line above. We have that
		\begin{equation}\label{esti5}
			\begin{split}
				&\left|\widehat{\calT_{\alpha,W} (u)}(\xi+y)\cdot (1+\lvert\xi+y\rvert^2)^{\frac{s}{2}}-\widehat{\calT_{\alpha,W} (u)}(\xi)\cdot (1+\lvert\xi\rvert^2)^{\frac{s}{2}}\right|\\
				=&\left|\widehat{Wu}(\xi+y)\cdot \frac{(1+\lvert\xi+y\rvert^2)^{\frac{s}{2}}}{\alpha+\lvert\xi+y\rvert^2}-\widehat{Wu}(\xi)\cdot\frac{(1+\lvert\xi\rvert^2)^{\frac{s}{2}}}{\alpha+\lvert\xi\rvert^2}\right|\\
				\leq & |\widehat{Wu}(\xi+y)|\cdot \left|\frac{(1+\lvert\xi+y\rvert^2)^{\frac{s}{2}}}{\alpha+\lvert\xi+y\rvert^2}-\frac{(1+\lvert\xi\rvert^2)^{\frac{s}{2}}}{\alpha+\lvert\xi\rvert^2}\right|+\frac{(1+\lvert\xi\rvert^2)^{\frac{s}{2}}}{\alpha+\lvert\xi\rvert^2}\cdot |\widehat{Wu}(\xi+y)-\widehat{Wu}(\xi)|.
			\end{split}
		\end{equation}
		According to \eqref{esti3} and Lemma~\ref{prelim_lemma2}, it holds that
		\begin{equation}\label{esti6}
			\begin{split}
				&\int_{\lvert\xi\rvert\leq 2R}|\widehat{Wu}(\xi+y)|\cdot \left|\frac{(1+\lvert\xi+y\rvert^2)^{\frac{s}{2}}}{\alpha+\lvert\xi+y\rvert^2}-\frac{(1+\lvert\xi\rvert^2)^{\frac{s}{2}}}{\alpha+\lvert\xi\rvert^2}\right|d\xi\\
				\leq & \epsilon \int_{\lvert\xi\rvert\leq 2R} |\widehat{Wu}(\xi+y)|d\xi \leq \epsilon \norm{W u}_{\calB^0(\bR^d)}\leq\epsilon \norm{W}_{\calB^0(\bR^d)}\norm{u}_{\calB^0(\bR^d)}\\
				\leq &\epsilon\norm{W}_{\calB^s(\bR^d)}\norm{u}_{\calB^s(\bR^d)}\leq \epsilon\norm{W}_{\calB^s(\bR^d)}.
			\end{split}
		\end{equation}
		By \eqref{L1L2}, \eqref{esti9}, and \eqref{esti4}, it holds that
		\begin{equation}\label{esti7}
			\begin{split}
				&\int_{\lvert\xi\rvert\leq 2R}\frac{(1+\lvert\xi\rvert^2)^{\frac{s}{2}}}{\alpha+\lvert\xi\rvert^2}\cdot |\widehat{Wu}(\xi+y)-\widehat{Wu}(\xi)|d\xi\\
				=&\int_{\lvert\xi\rvert\leq 2R}\frac{(1+\lvert\xi\rvert^2)^{\frac{s}{2}}}{\alpha+\lvert\xi\rvert^2}\cdot \left|\int_{\bR^d} \hat{u}(\eta)\hat{W}(\xi+y-\eta)d\eta-\int_{\bR^d} \hat{u}(\eta)\hat{W}(\xi-\eta)d\eta\right|d\xi\\
				\leq& L_1\int_{\lvert\xi\rvert\leq 2R}\int_{\bR^d} |\hat{u}(\eta)|\cdot|\hat{W}(\xi+y-\eta)-\hat{W}(\xi-\eta)|d\eta d\xi\\
				\leq& L_1\int_{\lvert\xi\rvert\leq 2R}\int_{\bR^d} |\hat{u}(\eta)|\cdot|\varphi(\xi+y-\eta)-\varphi(\xi-\eta)|d\eta d\xi\\
				&\qquad + L_1\int_{\lvert\xi\rvert\leq 2R}\int_{\bR^d} |\hat{u}(\eta)|\cdot|\hat{W}(\xi+y-\eta)-\varphi(\xi+y-\eta)|d\eta d\xi\\
				&\qquad + L_1\int_{\lvert\xi\rvert\leq 2R}\int_{\bR^d} |\hat{u}(\eta)|\cdot|\hat{W}(\xi-\eta)-\varphi(\xi-\eta)|d\eta d\xi\\
				\leq & \frac{\epsilon}{L_2} \int_{\lvert\xi\rvert\leq 2R}d\xi \int_{\bR^d} |\hat{u}(\eta)|d\eta + 2 L_1 \norm{\hat{W}-\varphi}_{L^1(\bR^d)}\int_{\bR^d} |\hat{u}(\eta)| d\eta \\
				\leq & 3\epsilon\int_{\bR^d} |\hat{u}(\eta)|d\eta \leq 3\epsilon\norm{u}_{\calB^s(\bR^d)}\leq 3\epsilon.
			\end{split}
		\end{equation}
		Combining \eqref{esti5}, \eqref{esti6}, and \eqref{esti7}, we obtain that
		\begin{equation*}
			\begin{split}
				&\int_{\lvert\xi\rvert\leq 2R}\left|\widehat{\calT_{\alpha,W} (u)}(\xi+y)\cdot (1+\lvert\xi+y\rvert^2)^{\frac{s}{2}}-\widehat{\calT_{\alpha,W} (u)}(\xi)\cdot (1+\lvert\xi\rvert^2)^{\frac{s}{2}}\right|d\xi\\
				&\qquad\qquad \leq (3+\norm{W}_{\calB^s(\bR^d)})\cdot\epsilon,
			\end{split}
		\end{equation*}
		which combined with \eqref{esti8} yields that
		\begin{equation*}
			\begin{split}
				&\int_{\bR^d}\left|\widehat{\calT_{\alpha,W} (u)}(\xi+y)\cdot (1+\lvert\xi+y\rvert^2)^{\frac{s}{2}}-\widehat{\calT_{\alpha,W} (u)}(\xi)\cdot (1+\lvert\xi\rvert^2)^{\frac{s}{2}}\right|d\xi\\
				&\qquad\qquad\leq (5+\norm{W}_{\calB^s}(\bR^d))\cdot\epsilon,
			\end{split}
		\end{equation*}
		for any $\lvert y\rvert<\delta$ and $\norm{u}_{\calB^s(\bR^d)}\leq 1$. This completes the proof.
	\end{proof}

	\subsection{Proof of the main results}
	
	We finish the proof of Theorem~\ref{main_thm} in this subsection. We first need to establish the existence of the solution to \eqref{integral_equ} that is equivalent to the original PDE \eqref{schrodinger_equ} in $\calB^s(\bR^d)$.
	
	\begin{proposition}\label{bdd_inverse_prop}
		Suppose that Assumption~\ref{assump_V} holds with $s\geq 0$. Then the operator 
		\begin{equation*}
			(I+\calT_{\alpha, W})^{-1}:\calB^s(\bR^d)\gt\calB^s(\bR^d)
		\end{equation*}
		is bounded.
	\end{proposition}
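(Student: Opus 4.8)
The plan is to derive both the existence and the boundedness of $(I+\calT_{\alpha,W})^{-1}$ from the Fredholm alternative. By Proposition~\ref{compact_prop}, $\calT_{\alpha,W}$ is a compact operator on the Banach space $\calB^s(\bR^d)$, hence so is $-\calT_{\alpha,W}$, and the Riesz--Schauder theory shows that $I+\calT_{\alpha,W}$ is Fredholm of index zero. Consequently $I+\calT_{\alpha,W}$ is bijective --- with bounded inverse, by the open mapping theorem --- as soon as it is injective. Thus the entire proof reduces to showing: if $u\in\calB^s(\bR^d)$ satisfies $u+\calT_{\alpha,W}(u)=0$, then $u\equiv 0$.

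To prove this injectivity I would first bootstrap the regularity of such a $u$. Since $u,W\in\calB^s(\bR^d)$ with $s\geq 0$, Lemma~\ref{prelim_lemma2} gives $Wu\in\calB^s(\bR^d)$, and then Lemma~\ref{prelim_lemma1} gives $u=-(\alpha-\Delta)^{-1}(Wu)\in\calB^{s+2}(\bR^d)\hookrightarrow\calB^2(\bR^d)$. From $\int_{\bR^d}|\hat u(\xi)|(1+\lvert\xi\rvert^2)\,d\xi<\infty$, the representation \eqref{inv_Fourier_trans} together with differentiation under the integral sign and the Riemann--Lebesgue lemma (using $\lvert\xi_i\xi_j\rvert\leq\lvert\xi\rvert^2\leq 1+\lvert\xi\rvert^2$) shows that $u\in C^2(\bR^d)$ and that $u$ as well as all of its first and second partial derivatives belong to $C_0(\bR^d)$. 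Applying $\alpha-\Delta$ to the identity $u=-(\alpha-\Delta)^{-1}(Wu)$ at the level of Fourier transforms then shows that $u$ is a classical, bounded solution of $-\Delta u+Vu=0$ on $\bR^d$, where $V=\alpha+W$ is continuous and nonnegative by Assumption~\ref{assump_V}.

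The heart of the matter is a maximum principle argument on the whole space. Suppose $M:=\max_{\bR^d}u>0$; the case $\min_{\bR^d}u<0$ is symmetric after replacing $u$ by $-u$ (which solves the same equation), and if neither occurs then $u\equiv 0$ already. Because $u\in C_0(\bR^d)$, the value $M$ is attained at some $x_0$, which lies in the open set $U:=\{x:u(x)>0\}$. On $U$ we have $\Delta u=Vu\geq 0$, so $u$ is subharmonic there, and the strong maximum principle for subharmonic functions forces $u\equiv M$ on the connected component $U_0$ of $U$ containing $x_0$. Since $u\in C_0(\bR^d)$ and $M>0$, $U_0$ cannot be all of $\bR^d$, so $\partial U_0\neq\emptyset$; choosing $y\in\partial U_0$, continuity gives $u(y)=M>0$, while $y\notin U$ (components of an open set are open) forces $u(y)\leq 0$, a contradiction. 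Hence $\max_{\bR^d}u\leq 0$, and symmetrically $\min_{\bR^d}u\geq 0$, so $u\equiv 0$.

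The main obstacle is precisely this uniqueness (injectivity) step on the unbounded domain $\bR^d$. A naive energy identity obtained by testing $-\Delta u+Vu=0$ against $u$ is delicate here, because functions in $\calB^{s+2}(\bR^d)$ need not lie in $L^2(\bR^d)$, so the boundary terms at infinity are not obviously negligible; this is why I would instead exploit the pointwise decay $u\in C_0(\bR^d)$ coming from the bootstrap and invoke the classical maximum principle. Once injectivity is established, the Fredholm alternative closes the argument immediately.
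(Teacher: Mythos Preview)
Your reduction to the Fredholm alternative is exactly the paper's: compactness of $\calT_{\alpha,W}$ (Proposition~\ref{compact_prop}) plus injectivity of $I+\calT_{\alpha,W}$ on $\calB^s(\bR^d)$ yields a bounded inverse by the open mapping theorem. The injectivity step, however, is handled differently. The paper (Proposition~\ref{injective_prop}) does not bootstrap or use that $u$ decays; it works only with $u\in L^\infty(\bR^d)$. From $V\geq 0$ and the weak maximum principle it obtains $\sup_{|x|\leq r}|u|=\sup_{|x|=r}|u|$, then uses Riemann--Lebesgue on $W$ to secure $V\geq\alpha/2$ outside a large ball and derives a differential inequality for the spherical averages $\psi_k(r)=\int_{\partial B_1(0)}u^2(rx+x_k)\,dS$ that forces $\psi_k(k)$ to grow geometrically in $k$, contradicting boundedness. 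Your route---bootstrap $u\in\calB^{s+2}\hookrightarrow\calB^2$, hence $u\in C^2\cap C_0(\bR^d)$, then the strong maximum principle on $\{u>0\}$---is correct and noticeably shorter: the decay $u\to 0$ at infinity (from $\hat u\in L^1$) is what replaces the paper's use of strict positivity of $V$ at infinity, and you only need $V\geq 0$ in the PDE argument itself. Conversely, the paper's growth argument establishes the slightly stronger statement that $-\Delta u+Vu=0$ has no nontrivial \emph{bounded} solution, without assuming any decay of $u$.
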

	
	Since $\calT_{\alpha, W}:\calB^s(\bR^d)\gt\calB^s(\bR^d)$ has been proved as compact in Proposition~\ref{compact_prop}, $I+\calT_{\alpha, W}$ is a Fredholm operator. Therefore, to show that $I+\calT_{\alpha, W}$ has bounded inverse, it suffices to show that $I+\calT_{\alpha, W}$ is injective, which is established in the following proposition.
	
	\begin{proposition}\label{injective_prop}
		Suppose that Assumption~\ref{assump_V} holds with $s\geq 0$. Then the operator 
		\begin{equation*}
			I+\calT_{\alpha, W}:\calB^s(\bR^d)\gt\calB^s(\bR^d)
		\end{equation*}
		is injective.
	\end{proposition}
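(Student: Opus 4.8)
The plan is to show that any $u \in \mathcal{B}^s(\mathbb{R}^d)$ with $u + \mathcal{T}_{\alpha,W}(u) = 0$ must vanish. The key observation is that such a $u$ solves the homogeneous PDE $-\Delta u + Vu = 0$ on $\mathbb{R}^d$; indeed, applying $(\alpha - \Delta)$ to the identity $u = -(\alpha-\Delta)^{-1}(Wu)$ gives $(\alpha-\Delta)u = -Wu$, i.e. $-\Delta u + (\alpha + W)u = 0$, which is exactly $-\Delta u + Vu = 0$. By Lemma~\ref{prelim_lemma0}(i) and the embedding $\mathcal{B}^s \hookrightarrow \mathcal{B}^0 \hookrightarrow L^\infty$, $u$ is bounded; moreover $\hat{u} \in L^1(\mathbb{R}^d)$, so by the Riemann–Lebesgue lemma $u$ is continuous and decays to $0$ at infinity. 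The goal is then a Liouville-type / unique-continuation argument: a bounded solution of $-\Delta u + Vu = 0$ with $V \geq 0$ that vanishes at infinity must be identically zero.

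First I would establish that $u \in H^1_{\mathrm{loc}}$ (in fact $u \in \mathcal{B}^{s+2} \subset H^{s+2}$, so $u \in H^2(\mathbb{R}^d)$ with all the decay coming from the $L^1$ Fourier bound), so that $u$, $\nabla u$, $\Delta u$ all lie in $L^2(\mathbb{R}^d)$ and decay suitably; this lets us integrate by parts globally without boundary terms. Testing the equation against $u$ itself yields
\begin{equation*}
    \int_{\mathbb{R}^d} |\nabla u|^2 \, \mathrm{d}x + \int_{\mathbb{R}^d} V |u|^2 \, \mathrm{d}x = 0.
\end{equation*}
Since $V \geq 0$ by Assumption~\ref{assump_V}(i), both terms are non-negative, forcing $\int |\nabla u|^2 = 0$, hence $u$ is constant. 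Combined with $u(x) \to 0$ as $|x| \to \infty$ (from Riemann–Lebesgue applied to $\hat u \in L^1$), the constant must be $0$.

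The main subtlety is justifying the global integration by parts: one must confirm that for $u \in \mathcal{B}^{s+2}$ — equivalently $u$ whose Fourier transform decays fast enough that $(1+|\xi|^2)^{(s+2)/2}\hat u \in L^1$ — the pairing $\langle -\Delta u + Vu, u\rangle = 0$ can be rewritten as $\|\nabla u\|_{L^2}^2 + \int V|u|^2$ with no boundary contribution. This follows because $u, \nabla u \in L^2(\mathbb{R}^d)$ (Plancherel, using $s+2 \geq 2 \geq 1$) and $Vu = (\alpha + W)u \in L^2$ as well since $W \in \mathcal{B}^s \subset L^\infty$; then the identity is just Plancherel/Parseval applied to the Fourier-side relation $(\alpha + |\xi|^2)\hat u = -\widehat{Wu}$, or more directly, $-\Delta u + Vu = 0$ in $L^2$ paired with $u \in H^1$ gives the quadratic identity by density of $C_c^\infty$ in $H^1$. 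An alternative, avoiding any regularity bookkeeping, is to work entirely on the Fourier side: multiply the relation $(\alpha+|\xi|^2)\hat u(\xi) = -\widehat{Wu}(\xi)$ by $\overline{\hat u(\xi)}$ and integrate, recognizing $\int (\alpha + |\xi|^2)|\hat u|^2 = \alpha\|u\|_{L^2}^2 + \|\nabla u\|_{L^2}^2$ and $-\int \widehat{Wu}\,\overline{\hat u} = -\int Wu \cdot u$ by Parseval, yielding $\|\nabla u\|_{L^2}^2 + \int (\alpha + W)|u|^2 = \|\nabla u\|_{L^2}^2 + \int V|u|^2 = 0$; the same conclusion follows. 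I would close by emphasizing that Assumption~\ref{assump_V}(i) (the non-negativity of $V$, not merely the decomposition in (ii)) is precisely what is used here, while the decay $u \to 0$ at infinity — needed to kill the constant — is exactly the point where the whole-space setting and the $L^1$ Fourier control enter.
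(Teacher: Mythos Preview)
There is a genuine gap. The claimed embedding $\calB^{s+2}(\bR^d)\subset H^{s+2}(\bR^d)$ is false: membership in $\calB^t$ means $(1+|\xi|^2)^{t/2}\hat u\in L^1(\bR^d)$, while membership in $H^t$ means $(1+|\xi|^2)^{t/2}\hat u\in L^2(\bR^d)$, and there is no inclusion of $L^1$ into $L^2$ on $\bR^d$. (Concretely, take $\hat u(\xi)=|\xi|^{-\beta}$ on $|\xi|\leq 1$ and $\hat u=0$ elsewhere, with $d/2\leq\beta<d$: then $u\in\calB^t(\bR^d)$ for every $t$ but $u\notin L^2(\bR^d)$.) Consequently you cannot deduce $u\in L^2(\bR^d)$, let alone $u\in H^1(\bR^d)$, from $u\in\calB^{s+2}(\bR^d)$, and both versions of your energy identity---the integration-by-parts version and the Parseval version---require $u\in L^2$ at the very least. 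A cutoff argument does not obviously rescue this either: testing against $\phi_R^2 u$ leaves an error of order $R^{d-2}\sup_{|x|>R}|u|^2$, and Riemann--Lebesgue provides no decay rate for $u$.

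The paper sidesteps $L^2$ entirely and works only with $u\in L^\infty(\bR^d)$ (which \emph{does} follow from $\hat u\in L^1$). From $V\geq 0$ and the weak maximum principle on balls it obtains $\sup_{|x|\leq r}|u|=\sup_{|x|=r}|u|$, hence a sequence $x_k$ with $|x_k|\to\infty$ and $|u(x_k)|\geq|u(x_0)|>0$; an ODE for the spherical averages $\psi_k(r)=\int_{\partial B_1(0)}u^2(rx+x_k)\,dS$, driven by the lower bound $V\geq\alpha/2$ near infinity, then forces $\psi_k(k)$ to grow without bound, contradicting $u\in L^\infty$. In fact your own observation that $u\to 0$ at infinity (Riemann--Lebesgue applied to $\hat u\in L^1$) already contradicts the existence of such a sequence $(x_k)$, so a short correct proof is available along those lines; the missing ingredient in your write-up is the maximum principle, not a global $L^2$ energy identity.
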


	\begin{proof} 
		Suppose that there exists some $u\in\calB^s(\bR^d)$ such that
		\begin{equation*}
			u+\calT_{\alpha,W}(u)=0,
		\end{equation*}
		which is equivalent to
		\begin{equation*}
			-\Delta u + V u = 0,
		\end{equation*}
		where $V=\alpha+W$. Since $s\geq 0$, we have that $u\in L^\infty(\bR^d)$ by Lemma~\ref{prelim_lemma0}. Furthermore, $u$ and $V$ are both continuous as the Fourier transform of a function in $\calB^s(\bR^d)$ is in $L^1(\bR^d)$.
		
		Suppose that $u$ is not identically zero, which means that $u(x_0)\neq 0$ holds for some $x_0\in \bR^d$. It follows from $\hat{W}\in L^1(\bR^d)$ and the Riemann-Lebesgue lemma that $\lim_{\lvert x\rvert\rightarrow\infty}W(x)=0$, which implies that there exists some $R\geq\lvert x_0\rvert$ such that
		\begin{equation*}
			\inf_{\lvert x\rvert\geq R} V(x)\geq \frac{\alpha}{2}.
		\end{equation*}
		Note that Assumption~\ref{assump_V}~(i) states that $V(x)\geq 0,\ \forall~x\in\bR^d$. According to weak maximal principle, we have for $r>0$ that
		\begin{equation}\label{max_principal}
			\sup_{\lvert x\rvert\leq r}|u(x)|=\sup_{\lvert x\rvert= r}|u(x)|.
		\end{equation}
		By \eqref{max_principal}, there is a sequence $\{x_k\}_{k=1}^\infty\subset\bR^d$ with
		\begin{equation}\label{x_k}
			\lvert x_k\rvert = R + k,\quad\text{and}\quad |u(x_k)|\geq |u(x_0)|.
		\end{equation}
		
		Let us set
		\begin{equation*}
			\psi_k(r)=\int_{\partial B_1(0)}u^2(r x + x_k)dS=r^{-(d-1)}\int_{\partial B_r(x_k)} u^2 dS\geq 0.
		\end{equation*}
		Then it holds that
		\begin{align*}
			\psi_k'(r)&=\int_{\partial B_1(0)}\frac{\partial}{\partial r}u^2(r x + x_k)dS=\int_{\partial B_1(0)}\left\langle \nabla u^2(r x + x_k),x\right\rangle dS\\
			&= r^{-(d-1)} \int_{\partial B_r(0)} \left\langle\nabla u^2(x+x_k),\frac{x}{r}\right\rangle dS =r^{-(d-1)} \int_{B_r(x_k)}\Delta (u^2) dx \\
			&=2 r^{-(d-1)} \int_{B_r(x_k)} (u \Delta u + |\nabla u|^2) dx\geq 2 r^{-(d-1)} \int_{B_r(x_k)} u \Delta u dx\\
			&= 2 r^{-(d-1)} \int_{B_r(x_k)} V u^2 dx.
		\end{align*}
		Note that $V(x)\geq\frac{\alpha}{2}$ holds on $B_k(x_k)$. We have for any $r\leq k$ that 
		\begin{align*}
			\psi_k'(r)&\geq 2 r^{-(d-1)} \int_{B_r(x_k)} V u^2 dx \geq \alpha r^{-(d-1)} \int_{B_r(x_k)} u^2 dx\\
			&= \alpha r^{-(d-1)} \int_0^r \int_{\partial B_t(x_k)} u^2 dS dt= \alpha \int_0^r \left(\frac{t}{r}\right)^{d-1}\psi_k(t)dt\geq 0,
		\end{align*}
		which implies that $\psi_k$ is monotonically increasing on $[0,k]$. For any $n\in\{1,2,\dots, k-1\}$ and any $r\in[n,n+1]$, we have that
		\begin{equation*}
			\begin{split}
				\psi_k'(r)&\geq \alpha \int_0^r \left(\frac{t}{r}\right)^{d-1}\psi_k(t)dt\geq \alpha \int_n^r \left(\frac{t}{r}\right)^{d-1}\psi_k(t)dt\\
				&\geq \alpha (r-n)\cdot\left(\frac{n}{n+1}\right)^{d-1}\psi_k(n),
			\end{split}
		\end{equation*}
		and hence that
		\begin{align*}
			\psi_k(n+1) & = \psi_k(n) + \int_n^{n+1} \psi_k'(r)dr\\
			&\geq \left(1+ \alpha \int_n^{n+1}(r-n)dr \cdot\left(\frac{n}{n+1}\right)^{d-1}\right)\psi_k(n)\\
			&\geq \left(1+ \frac{\alpha}{2^d}\right)\cdot\psi_k(n).
		\end{align*}
		Thus, it holds that
		\begin{equation}\label{psi_k}
			\begin{split}
				\psi_k(k)&\geq \left(1+ \frac{\alpha}{2^d}\right)^{k-1}\cdot\psi_k(1)\geq \left(1+ \frac{\alpha}{2^d}\right)^{k-1}\cdot\psi_k(0)\\
				&=\left(1+ \frac{\alpha}{2^d}\right)^{k-1}\cdot u(x_k)^2\geq \left(1+ \frac{\alpha}{2^d}\right)^{k-1}\cdot u(x_0)^2,\quad\forall~k\in\bNp,
			\end{split}
		\end{equation}
		where we used the monotone property of $\psi_k$ on $[0,k]$ and \eqref{x_k}.
		
		Note that $u\in L^\infty(\bR^d)$. So $\{\psi(k)\}_{k=1}^\infty$ must be bounded, which contradicts \eqref{psi_k} as $u(x_0)\neq 0$. We therefore can conclude that $u=0$, which proves the injectivity of $I+\calT_{\alpha,W}$.
	\end{proof}
	
	\begin{remark}
		We remark that the standard proof of the uniqueness of $H^1$-solutions to elliptic PDEs in  dose not apply to the Barron solutions. In fact, the uniqueness in $H^1(\bR^d)$ of solutions of the equation $-\Delta u + Vu =0$  follows from a standard energy estimate. Noticing that $-\Delta u + Vu =0 \in H^{-1}(\bR^d)$ that is the dual space of $H^1(\bR)$, one has that $0 = \int_{\bR^d} (-\Delta u + Vu) u = \int_{\bR^d} |\nabla u|^2 + V u^2$, which implies $u=0$.  However, such energy estimate in general does not apply to Barron functions in $\calB^s(\bR^d)$ since in general $\calB^s(\bR^d) \nsubseteq H^1(\bR^d)$. To give a concrete example, let us consider the function $u$ whose  Fourier transform is defined by  $$\hat{u}(\xi_1,\dots,\xi_d) = \begin{cases}|\xi_1|^{-\frac{1}{2}}, & \text{if }\xi\in([-1,1]\backslash\{0\})\times [-1,1]^{d-1},\\
			0, &\text{otherwise}.\end{cases}$$ Then $\int_{\bR^d} |\hat{u}(\xi)|\cdot (1+|\xi|^2)^{s/2}d\xi<\infty$ while $\int_{\bR^d} |\hat{u}(\xi)|^2 \cdot(1+|\xi|^2)d\xi = \infty$, i.e., $u\in \calB^s(\bR^d)\backslash H^1(\bR^d)$. 
	\end{remark}
	
	Proposition~\ref{bdd_inverse_prop} is then a direct corollary.
	
	\begin{proof}[Proof of Proposition~\ref{bdd_inverse_prop}]
		The result follows directly from Proposition~\ref{compact_prop}, Proposition~\ref{injective_prop}, and the Fredholm alternative.
	\end{proof}
	
	We can finally prove Theorem~\ref{main_thm}.
	
	\begin{proof}[Proof of Theorem~\ref{main_thm}]
		Let $u^*$ be the unique solution to \eqref{schrodinger_equ} or \eqref{integral_equ}. Notice that by Proposition~\ref{bdd_inverse_prop} and Lemma~\ref{prelim_lemma1},
		\begin{equation*}
			u^* = (I+\calT_{\alpha,W})^{-1}\left((\alpha-\Delta)^{-1}f\right)\in\calB^s(\bR^d),
		\end{equation*}
		with
		\begin{equation}\label{u_star1}
			\begin{split}
				\norm{u^*}_{\calB^s(\bR^d)}&=\norm{(I+\calT_{\alpha,W})^{-1}\left((\alpha-\Delta)^{-1}f\right)}_{\calB^s(\bR^d)}\\
				&\leq \frac{1}{\alpha}\norm{(I+\calT_{\alpha,W})^{-1}}_{\calB^s(\bR^d)\rightarrow\calB^s(\bR^d)}\norm{f}_{\calB^s(\bR^d)}.
			\end{split}
		\end{equation}
		It follows from Lemma~\ref{prelim_lemma1}, Lemma~\ref{prelim_lemma2}, and \eqref{schrodinger_equ} that
		\begin{equation}\label{u_star2}
			\begin{split}
				\norm{u^*}_{\calB^{s+2}(\bR^d)}&\leq \frac{1}{\min\{\alpha, 1\}}\norm{(\alpha-\Delta)u^*}_{\calB^s(\bR^d)}\\
				&=\frac{1}{\min\{\alpha, 1\}}\norm{W u^*-f}_{\calB^s(\bR^d)}\\
				&\leq \frac{1}{\min\{\alpha,1\}}(\norm{W u^*}_{\calB^s(\bR^d)}+\norm{f}_{\calB^s(\bR^d)})\\
				&\leq \frac{1}{\min\{\alpha,1\}}\left(2^{\frac{s}{2}}\norm{W}_{\calB^s(\bR^d)}\norm{u^*}_{\calB^s(\bR^d)}+\norm{f}_{\calB^s(\bR^d)}\right).
			\end{split}
		\end{equation}
		Combining \eqref{u_star1} and \eqref{u_star2}, we obtain that
		\begin{equation*}
			\norm{u^*}_{\calB^{s+2}(\bR^d)}\leq \frac{1}{\min\{\alpha,1\}}\left(\frac{2^{\frac{s}{2}}\norm{W}_{\calB^s(\bR^d)}}{\alpha}\norm{(I+\calT_{\alpha,W})^{-1}}_{\calB^s(\bR^d)\rightarrow\calB^s(\bR^d)}+1\right)\norm{f}_{\calB^s}.
		\end{equation*}
		Hence, \eqref{regularity_esti} holds with
		\begin{equation*}
			C=\frac{1}{\min\{\alpha,1\}}\left(\frac{2^{\frac{s}{2}}\norm{W}_{\calB^s(\bR^d)}}{\alpha}\norm{(I+\calT_{\alpha,W})^{-1}}_{\calB^s(\bR^d)\rightarrow\calB^s(\bR^d)}+1\right),
		\end{equation*}
		which completes the proof.
	\end{proof}
	
	Theorem~\ref{approx_thm} then follows directly from Theorem~\ref{main_thm} and some techniques for establishing approximation without CoD in previous literature.
	
	\begin{proof}[Proof of Theorem~\ref{approx_thm}]
		This proof uses techniques from \cites{Barron93, E19}, and is similar to \cite{chen2021representation}*{Theorem 2.5}. Note that $s\geq 0$. According to Theorem~\ref{main_thm} and Lemma~\ref{prelim_lemma0}, it holds that
		\begin{equation*}
			\norm{u^*}_{\calB^0(\bR^d)}\leq \norm{u^*}_{\calB^2(\bR^d)}\leq \norm{u^*}_{\calB^{s+2}(\bR^d)}\leq C \norm{f}_{\calB^s(\bR^d)}.
		\end{equation*}
		Denote $\hat{u^*}(\xi)=|\hat{u^*}(\xi)| e^{i\theta(\xi)}$ and let $\mu$ be a probability distribution on $\bR^d$ with density being $|\hat{u^*}(\xi)|/\norm{\hat{u^*}}_{L^1(\bR^d)}=|\hat{u^*}(\xi)|/\norm{u^*}_{\calB^0(\bR^d)}$. Then the real-valued function $u^*$ can be represented as 
		\begin{equation*}
			\begin{split}
				u^*(x)&=\int_{\bR^d}\hat{u^*}(\xi) e^{i \xi^T x}d\xi=\int_{\bR^d}|\hat{u^*}(\xi)| e^{i (\xi^T x + \theta(\xi))}d\xi \\
				& =\int_{\bR^d}|\hat{u^*}(\xi)| \cos(\xi^T x + \theta(\xi)) d\xi
				= \norm{u^*}_{\calB^0(\bR^d)} \bE_{\xi\sim\mu} \left[\cos(\xi^T x + \theta(\xi))\right].
			\end{split}
		\end{equation*}
		Note that $\hat{u^*}\in\calB^2(\bR^d)$, which implies that $\mu$ has finite first-order and second-order moment. Therefore,
		\begin{equation*}
			\frac{\partial}{\partial x_k} u^*(x) = - \norm{u^*}_{\calB^0(\bR^d)} \bE_{\xi\sim\mu}[\langle \xi, e_k\rangle \sin(\xi^T x + \theta(\xi))].
		\end{equation*}
		Let $\xi_1,\xi_2,\dots,\xi_n$ be i.i.d. samples from $\mu$, and let
		\begin{equation*}
			u_n(x)=\frac{1}{n}\sum_{j=1}^n a_j\cos(w_j^\top x + b_j),
		\end{equation*}
		where $a_j=\norm{u^*}_{\calB^0(\bR^d)}$, $w_j=\xi_j$, and $b_j=\theta(\xi_j)$. Then it holds that
		\begin{align*}
			& \bE_{\mu^{\otimes n}}\norm{u^*-u_n}_{H^1(\Omega)}^2\\
			= & \bE_{\mu^{\otimes n}}\left[\int_\Omega |u^*(x)-u_n(x)|^2 dx + \sum_{k=1}^d \int_\Omega \left|\frac{\partial}{\partial x_k} u^*(x)-\frac{\partial}{\partial x_k} u_n(x)\right|^2 dx\right]\\
			= & \norm{u^*}_{\calB^0(\bR^d)}^2 \int_\Omega \text{Var}_{\mu^{\otimes n}} \left[\frac{1}{n}\sum_{j=1}^n \cos(\xi_j^\top x + \theta(\xi_j))\right]dx \\
			& + \norm{u^*}_{\calB^0(\bR^d)}^2 \sum_{k=1}^d \int_\Omega \text{Var}_{\mu^{\otimes n}} \left[\frac{1}{n}\sum_{j=1}^n \langle \xi_j,e_k\rangle \sin(\xi_j^\top x + \theta(\xi_j))\right]dx \\
			= & \frac{\norm{u^*}_{\calB^0(\bR^d)}^2}{n} \int_\Omega \text{Var}_{\xi\sim\mu} \left[\cos(\xi^\top x + \theta(\xi))\right]dx \\
			& + \frac{\norm{u^*}_{\calB^0(\bR^d)}^2}{n} \sum_{k=1}^d \int_\Omega \text{Var}_{\xi\sim\mu} \left[\langle \xi,e_k\rangle \sin(\xi^\top x + \theta(\xi))\right]dx \\ 
			\leq & \frac{\norm{u^*}_{\calB^0(\bR^d)}^2}{n} \int_\Omega \bE_{\xi\sim\mu}\left[1+\sum_{k=1}^d \langle \xi,e_k\rangle^d\right]dx\\
			= &  \frac{\norm{u^*}_{\calB^0(\bR^d)}}{n} \int_\Omega \int_{\bR^d}|\hat{u^*}(\xi)|\cdot(1+\lvert\xi\rvert^2)d\xi dx \\
			\leq &\frac{m(\Omega) \norm{u^*}_{\calB^0(\bR^d)} \norm{u^*}_{\calB^2(\bR^d)}}{n} \\
			\leq & \frac{m(\Omega)\cdot C^2 \norm{f}_{\calB^s(\bR^d)}^2}{n},
		\end{align*}
		which implies \eqref{approx_equ}.
	\end{proof}

	\bibliographystyle{amsxport}
	\bibliography{references}

\end{document}